\numberwithin{equation}{section}
\newcommand\norm[1]{\left\lVert#1\right\rVert}
\newtheorem{theorem}{Theorem}[section]
\newtheorem{lemma}[theorem]{Lemma}
\title[Relative bending energy for weakly prestrained shells]{Relative bending energy for weakly prestrained shells}
\author{Silvia Jim\'enez Bola\~nos}
\address{Department of Mathematics, Colgate University,
13 Oak Drive, Hamilton, NY 13346 USA}
\email{sjimenez@colgate.edu}
\author{Anna Zemlyanova}
\address{Department of Mathematics, Kansas State University,
Manhattan, Kansas, 66506 USA }
\email{azem@ksu.edu }
\date{}
\keywords{non-Euclidean plates; nonlinear elasticity; gamma convergence; calculus of variations.}
\begin{document}
\begin{abstract}
In this paper, we derive a dimensionally reduced model for a thin film prestrained with a given incompatible Riemannian metric: 
$$G^h(x',x_3)=I_3+2h^{\gamma}\,S(x')+2h^{\gamma/2}\,x_3B(x')+h.o.t, \,\,\,\gamma>2,$$ 
where $0<h\ll 1$ is the thickness of the film.  The problem is studied rigorously by using a variational approach and establishing the $\Gamma$-convergence of the non-Euclidean version of the nonlinear elasticity functional.  It is shown that the residual nonlinear elastic energy scales as $O(h^{\gamma+2})$ as $h\to 0$. 
\end{abstract}

\maketitle

\section{Introduction.}

The present study is concerned with the derivation of dimensionally reduced models for limiting behavior of thin films prestrained with a family of incompatible metrics $G^h$.  The motivation behind this study comes from applications for thin objects with internal prestrain such as growing tissues and various manufactured phenomena (for instance, polymer gels, atomically thin graphene layers, and plastically strained sheets).  Shape formation driven by internal prestrain is a very active area of research which has been tackled before by various  analytic and numerical arguments; see for instance \cite{Rodriguez1994,Klein2007,EFRATI2009762,Kimetal2012,Jones2015, Maor2014,KUPFERMAN2014,lewicka2019,lewicka2018}. 

The dimension reduction problems for thin plates and shells consist of minimizing a nonlinear elastic energy functional representing a mismatch between the deformation of the film and the target Riemannian metric $G$.  In \cite{Raoult1995, Raoult1996}, the authors considered the case $G=\mbox{Id}_3$ and derived nonlinear membrane models, for planar membranes and shells, from the variational formulation of the  three-dimensional nonlinear elasticity.  They showed that the deformations that minimize, or almost minimize, the total energy weakly converge in a Sobolev space towards deformations that minimize a nonlinear membrane energy, as the thickness of the body goes to zero.  The limiting nonlinear membrane energy was obtained by $\Gamma$-convergence of the sequence of three-dimensional energies.  The notion of $\Gamma$-convergence, introduced by Ennio De Giorgi in a series of papers published between 1975 and 1983 \cite{DeGiorgi84}, has become the standard notion of convergence for variational problems \cite{Braides02}.

In the fundamental papers \cite{FRM2002, Friesecke2006}, the authors derived the hierarchy of limiting theories of thin plates which arise as $\Gamma$- limits of three dimensional nonlinear theory in the classical elasticity.  In particular, they were able to show that von K\'arm\'an equations can be obtained rigorously from variational formulation of the nonlinear elasticity. The limiting theories differ in their scaling as powers of the thickness $h$ of the plate or shell depending on the scaling of the applied external forces.

The work in this area has been further extended to limiting theories for thin shells in \cite{Raoult1996, Friesekeetal2003, Lewickaetal2009, Lewickaetal2010, lewicka2011, Lewickaetal2011, LewickaPakzad2013, Lewickaetal2017}.  The first results for thin nonlinear elastic shells were obtained in \cite{Raoult1996} for the scaling $\beta=0$.  The case  $\beta=2$  was investigated in \cite{Friesekeetal2003}.  The nonlinear theory for shells with varying thickness with external loading was studied in \cite{Lewickaetal2009}.  In \cite{Lewickaetal2010}, the authors studied higher order $\beta\geq 4$ energy scaling for thin nonlinear elastic shells.  The paper \cite{Lewicka2011a} further extends the results of \cite{Lewickaetal2010}, by proving that the equilibria of nonlinear energy functional for a thin shell converge to the equilibria of the von K\'arm\'an functional.  The intermediate case of $2<\beta<4$ was studied in 
\cite{Lewickaetal2011}.  Finally, the results were summarized and the conjecture about the infinite hierarchy of limiting two-dimensional models was made in \cite{LewickaPakzad2013}.  The problem for thin shells has been revisited in \cite{Lewickaetal2017} to study the $\Gamma$-limit for thin shells with certain scaling of the applied forces.  In the papers mentioned above, the deformation of the plate or shell has been activated due to an application of external forces and not due to the prestrain.

In the context of the prestrain-driven response, the parallel theories are differentiated by the embeddability properties of the target metrics and, a-posteriori, by the emergence of isometry constraints on deformations with low regularity.  In turn, results on thin limit models have ramifications for the three dimensional original model with regard to energy scaling laws, understanding of the role of curvature in identifying the material's mechanical properties, and in the consequences of the symmetry and the symmetry breaking in the solutions to the resulting Euler-Lagrange equations.  The first work to rigorously study non-trivial configurations of thin prestrained flat films was produced in \cite{lewicka2011-c}. In this paper, the variational formulation of the problem was introduced by using a nonlinear elasticity functional and the necessary and sufficient conditions for existence of a $W^{2,2}$ isometric immersion were obtained.  The targeted metric in \cite{lewicka2011-c} does not depend on the thickness variable of the plate. Further results for quantization of the elastic energy for the case of thickness-independent metric have been obtained in \cite{Bhattacharya2016} and \cite{lewicka2017}.  A survey of available models for the thickness-independent prestrain is given in \cite{lewicka2018-r}. The question of immersability for a thickness-dependent Riemann metric satisfying certain general conditions has been decisively answered in \cite{lewicka2019}.  In \cite{lewicka2018}, the dimension reduction for oscillatory metrics was obtained.  The prescribed incompatibility metric in this paper exhibits a nonlinear dependence on the transversal variable. Asymptotic theories for shallow shells prestrained with a growth tensor linearly dependent on the transversal variable were studied in \cite{lewicka2014}. A survey of results available in prestrained elasticity can be found in \cite{LewickaPakzad2016}. 

In our paper, as in \cite{schmidt2007,lewicka2011,lewicka2015}, the prestrain metrics $G^h$ (see (\ref{Gh-N})) are perturbations of the flat $I_3$ metric.  In particular, in \cite{lewicka2015} the authors derived a new variational model consisting of minimizing a biharmonic energy of the out-of plane displacement $v\in W^{2,2}(\Omega,\mathbb{R})$, satisfying the Monge-Amp\`ere constraint $\det \nabla^2v=f,$ where $f=-{\rm curl}^T{\rm curl}\,S_{2\times2}$ is the linearized Gauss curvature of the Riemannian metrics in (\ref{Gh-N}).  The work in \cite{lewicka2015} was done in the parameter range $0<\gamma<2$, whereas the case $\gamma=2$ was treated in \cite{lewicka2011}, leading to the derivation of the F\"oppl-von K\'arm\'an equations accounting for the presence of the prestrain.  The main contribution of our paper is to develop the analysis for the parameter range $\gamma>2$.   We identified  the asymptotic behavior of the minimizers of $I_W^h(u^h)$ as $h\rightarrow0$ (see (\ref{I})), through deriving the $\Gamma$-limit of the rescaled energies $\displaystyle \frac{1}{h^{\gamma+2}}I_W^h(u^h)$.  These new outcomes are presented in Theorem~\ref{Th1} and Theorem~\ref{Th-recovseq}. A generalization of the current results to the growth tensors $A^h(x',x_3)=I_3+h^{\alpha}S(x')+h^{\gamma/2}x_3B(x')$, with arbitrary powers $\alpha$ and $\gamma$, is currently in preparation \cite{BLZ2019}. 

This paper is organized as follows.  In Section~\ref{sec:Form}, we give the problem formulation.  In Section~\ref{sec:gamma}, we present the main results of this paper, which are stated in Theorem~\ref{Th1} and Theorem~\ref{Th-recovseq}.  Finally, in Section~\ref{sec:proof1} and Section~\ref{sec:proof2}, we present the proofs of Theorem~\ref{Th1} and Theorem~\ref{Th-recovseq}, respectively.

\section{Problem formulation: non-Euclidean elasticity model.}
\label{sec:Form}
Let the smooth invertible growth tensors $A^h=[A_{ij}^{h}]:\overline{\Omega^h}\rightarrow\mathbb{R}^{3\times 3}$, $\det A^h>0$, be defined by: 
\begin{equation}
	\label{Ah-N}
	A^h(x',x_3)=I_3+h^{\gamma}S(x')+h^{\gamma/2}x_3B(x'),
\end{equation}
where the scaling exponent $\gamma>2$ and $I_n$ is the $n\times n$ identity matrix.  Consider a family of three-dimensional thin plates: $$\Omega^h=\omega\times\big(-h/2,h/2\big)\subset\mathbb{R}^3,$$ where $\omega$ is an open bounded set of $\mathbb{R}^2$ and $0<h\ll1$, viewed as the reference configurations of thin elastic films.  A typical point in $\Omega^h$ is denoted by $x=(x_1,x_2,x_3)=(x',x_3)$, where $x'\in\omega$ and $|x_3|<h/2$.  The ``stretching" and ``bending" tensors $S,B:\overline{\omega}\rightarrow\mathbb{R}^{3\times3}$ are two given smooth matrix fields.

	For a deformation $u^h:\Omega^h\rightarrow\mathbb{R}^3$, its elastic energy $I_W^h(u^h)$, defined by:
\begin{align}
	\label{I}
	I_W^h(u^h)&=\frac{1}{h}\int_{\Omega^h}W(F)dx\notag\\
	&=\frac{1}{h}\int_{\Omega^h}W(\nabla u^h(A^h)^{-1})dx \hspace{4mm}\forall u^h\in W^{1,2}(\Omega^h,\mathbb{R}^3),
\end{align} 
is given in terms of the elastic tensor $F=\nabla u^h(A^h)^{-1}$ (see \cite{Rodriguez1994}) accounting for the reorganization of $\Omega^h$ in response to $A^h$.

	The elastic energy density $W:\mathbb{R}^{3\times 3}\rightarrow\mathbb{R}_+$ is assumed to satisfy the standard conditions of normalization, frame indifference with respect to the special orthogonal group $SO(3)$ of proper rotations in $\mathbb{R}^3$, and second order nondegeneracy given by: 
\begin{align}
	\label{W1}
		\exists\, c>0  \hspace{2mm} \forall\, F\in\mathbb{R}^{3\times 3}\hspace{3mm} \forall \,R\in SO(3)\hspace{3mm} & W(R)=0, \hspace{3mm} W(RF)=W(F),\notag\\
	& W(F)\geq c\hspace{1mm} {\text{dist}}^2(F,SO(3)).
\end{align}
	We also assume that there exists a monotone nonnegative function $\nu:[0,+\infty]\rightarrow[0,+\infty]$ which converges to zero at $0$, and a quadratic form $\mathcal{Q}_3$ on $\mathbb{R}^{3\times 3}$, with:
\begin{equation}
	\label{SOND}
	\forall F\in\mathbb{R}^{3\times 3} \hspace{5mm} \big|W(I_3+F)-\frac{1}{2}\mathcal{Q}_3(F)\big|\leq\nu(|F|)|F|^2.
\end{equation}
	If $W$ is $\mathcal{C}^2$ regular in a neighborhood of $SO(3)$, then condition (\ref{SOND}) is satisfied and, in that case, we have $\mathcal{Q}_3=D^2W(I_3)$.  Note that (\ref{SOND}) implies that $\mathcal{Q}_3$ is nonnegative, is positive definite on symmetric matrices, and $\mathcal{Q}_3(F)=\mathcal{Q}_3(\text{sym}\hspace{1mm} F)$ for all $F\in\mathbb{R}^{3\times 3}$ (see \cite{lewicka2015} for a proof).

	Recall that in (\ref{I}), $I_W^h(u^h)=0$ is equivalent, via  (\ref{W1}) and the polar decomposition theorem, to:
\begin{equation}
	\label{I0}
	(\nabla u^h)^T\nabla u^h=(A^h)^T(A^h) \hspace{4mm} \text{and} \hspace{4mm} \text{det}\nabla u^h>0 \hspace{3mm} \text{in } \Omega^h;
\end{equation}
which is equivalent to: $I_W^h(u^h)=0$ if and only if $u^h$ is an isometric immersion of the Riemannian metric $G^h=(A^h)^T(A^h)$.  Therefore, the quantity:
\begin{equation*}
	\label{RE}
	e_h=\inf\left\{I_W^h(u^h);\hspace{1mm}u^h\in W^{1,2}(\Omega^h,\mathbb{R}^3)\right\}
\end{equation*}
measures the residual energy at free equilibria of the configuration $\Omega^h$ that has been prestrained by $G^h$.  This is consistent with Theorem~2.2 in \cite{lewicka2011-c}, which observes that $e_h>0$ whenever $G^h$ has no smooth isometric immersion in $\mathbb{R}^3$, i.e. when there is no $u^h$ with (\ref{I0}) or, equivalently, when the Riemann curvature tensor of the metric $G^h$ does not vanish identically on $\Omega^h$.

	Observe now that $A^h$ in (\ref{Ah-N}) yields:
\begin{align}
	\label{Gh-N}
		G^h(x',x_3)&=(A^h)^T(A^h)\notag\\
	&=I_3+2h^{\gamma}\text{sym}\hspace{1mm} S(x')+2h^{\gamma/2}x_3\text{sym}\hspace{1mm}B(x')\notag\\
	&\quad+2h^{3\gamma/2}x_3\text{sym}\,S(x')^TB(x')\notag\\
	&\quad+h^{2\gamma}S(x')^TS(x')+h^{\gamma}x_3^2B(x')^TB(x').
\end{align}

\subsection{Notation.}
For a matrix $F$, $F_{n\times m}$ denotes its $n\times m$ principal minor.  If $m=n$, the symmetric part of a square matrix $F$ is denoted by ${\rm sym}F=(F+F^T)/2$.  The superscript $^T$ refers to the transpose of a matrix or an operator.  

	Also, for any $F\in \mathbb{R}^{2\times2}$, we denote by $F^*\in \mathbb{R}^{3\times3}$ the matrix for which $F^*_{2\times2}=F$ and $F^*_{i3}=F^*_{3i}=0$, for $i=1,2,3$.  By $\nabla_{tan}$ we denote taking derivatives $\partial_1$ and $\partial_2$ in the in-plate directions $e_1=(1,0,0)^T$ and $e_2=(0,1,0)^T$.  The derivative $\partial_3$ is taken in the out-of-plate direction $e_3=(0,0,1)^T$.

	Finally, we use the Landau symbols $\mathcal{O}(h^\alpha)$ and $o(h^\alpha)$ to denote quantities which are of the order of, or vanish faster than $h^\alpha$, as $h\rightarrow0$.  By $C$ we denote any universal constant, depending on $\omega$ and $W$, but independent of other involved quantities, so that $C=\mathcal{O}(1)$, and it can change values from line to line.

\subsection{Formal discussion about scaling.}
	Consider the deformations $u^h:\Omega^h\rightarrow\mathbb{R}^3$ of the shell $\Omega^h$ given by: $$u^{h}(x',x_3)=(x',0)^T+h^{\gamma/2} V(x')+x_3N_h(x')+h.o.t.,$$ where $N_h(x')$ is the unit normal to the midplate and $V:\omega\rightarrow\mathbb{R}^3$.  Notice that we have:
\begin{equation*}
\label{gradu}
\nabla u^h(x',x_3)=(I_3)^*+h^{\gamma/2}\nabla V+ N_h\otimes e_3+x_3\nabla N_h+h.o.t.,
\end{equation*}
and, since $F=\nabla u^h (A^h)^{-1}$, we have that:
\begin{align}
	\label{RFTF-2}
	\sqrt{F^TF}&=I_3-h^\gamma{\rm sym}\, S-x_3h^{\gamma/2} {\rm sym}\,B+h^{\gamma/2}\,\text{sym}\nabla V\notag\\
	&\quad+\frac{h^\gamma}{2}{\nabla V}^T\nabla V+x_3\text{sym}\left(\left[I_3+h^{\gamma/2}{\nabla V}^T\right]\nabla N_h\right)+\text{h.o.t.}
\end{align}
Now, by (\ref{RFTF-2}) and using the Taylor expansion for $W$, we can go back to (\ref{I}) to obtain:
\begin{equation}
	\label{I2}
	I_W^h(u^h)=\frac{1}{h}\int_{\Omega^h}W(\sqrt{F^TF})dx'dx_3 =\frac{1}{h}\int_{\Omega^h}\left(\frac{1}{2}\mathcal{Q}_3(\star)+h.o.t.\right)dx'dx_3,
\end{equation}
where $\mathcal{Q}_3(\star)=D^2W(Id)(\star,\star)$, and $\star$ is given by:
\begin{align*}
\star&=-h^\gamma{\rm sym} S-x_3h^{\gamma/2} {\rm sym}B+h^{\gamma/2}\,\text{sym}\nabla V\\
	&\quad+\frac{h^\gamma}{2}\nabla^TV\nabla V+x_3\text{sym}\left(\left[I_3+h^{\gamma/2}\nabla V^T)\right]\nabla N_h\right)+\text{h.o.t}.
\end{align*}

Thus, we can rewrite (\ref{I2}) as:
\begin{align*}
	\label{I3}
	I_W^h(u^h)&=\int_{\omega}\frac{1}{2}\mathcal{Q}_3(-h^\gamma {\rm sym}S+h^{\gamma/2}\,\text{sym}\nabla V+\frac{h^\gamma}{2}{\nabla V}^T\nabla V)dx'\notag\\
	&\quad+\frac{h^2}{24}\int_{\omega}\mathcal{Q}_3(-h^{\gamma/2} {\rm sym}B-h^{\gamma/2}\nabla^2V_3+h^\gamma \mathcal{D})dx'+h.o.t.\notag\\
	&=\frac{h^{2\gamma}}{8}\int_{\omega}\mathcal{Q}_3(-2{\rm sym}S+{\nabla V}^T\nabla V)dx'\notag\\
	&\quad+\frac{h^{\gamma+2}}{24}\int_{\omega}\mathcal{Q}_3(-{\rm sym}B-\nabla^2V_3+h^{\frac{\gamma}{2}} \mathcal{D})dx'+h.o.t.,
\end{align*} 
where the matrix $D$ is given by:
\begin{equation*}
	\mathcal{D}=\begin{pmatrix}V_{3,1}V_{1,11}+V_{3,2}V_{2,11}&V_{3,1}V_{1,12}+V_{3,2}V_{2,12}\\V_{3,1}V_{1,21}+V_{3,2}V_{2,21}&V_{3,1}V_{1,22}+V_{3,2}V_{2,22}\end{pmatrix}.
\end{equation*}
Since $\gamma>2$, we observe that $I_W^h(u^h)\approx C\,h^{\gamma+2}$ and hence, we expect the $\Gamma$-limit of $\displaystyle \frac{1}{h^{\gamma+2}}I_W^h(u^h)$ to be only the first order change in the linear bending energy: $$\frac{1}{24}\int_{\omega}\mathcal{Q}_3(B+\nabla^2V_3)\,dx'.$$

\section{The variational limit in the case $\gamma>2$.}
\label{sec:gamma}
As explained before, the main goal of this paper is the identification of the asymptotic behavior of the minimizers of $I_W^h(u^h)$ as $h\rightarrow0$, through deriving the $\Gamma$-limit of the rescaled energies $\displaystyle \frac{1}{h^{\gamma+2}}I_W^h(u^h)$.

In \cite{lewicka2015} the authors derived a new variational model consisting of minimizing a biharmonic energy $\displaystyle\int_\omega |\nabla^2v|^2~dx'$ of the out-of plane displacement $v\in W^{2,2}(\omega,\mathbb{R})$, satisfying the Monge-Amp\`ere constraint $\det \nabla^2v=f,$ where $f=-{\rm curl}^T{\rm curl}\,S_{2\times2}$ is the linearized Gauss curvature of the Riemannian metrics in (\ref{Gh-N}).  This work was done in the parameter range $0<\gamma<2$, whereas the case $\gamma=2$ was treated in \cite{lewicka2011}, leading to the derivation of the F\"oppl-von K\'arm\'an equations accounting for presence of the prestrain.  In what follows, we carry out the analysis for the parameter range $\gamma>2$.  

We now state the main results of this paper:
\begin{theorem}
	\label{Th1}
	Let $A^h$ be given as in (\ref{Ah-N}), with an arbitrary exponent $\gamma>2$.  Assume that a sequence of deformations $u^h\in W^{1,2}(\Omega^h,\mathbb{R}^3)$ satisfies:
\begin{equation}
\label{Scaling}
	I_W^h(u^h)\leq Ch^{\gamma+2},
\end{equation}
where $W$ fulfills (\ref{W1}) and (\ref{SOND}).  Then, there exist rotations $\bar{R}^h\in SO(3)$ and translations $c^h\in\mathbb{R}^3$ such that, for the normalized deformations:
\begin{equation}
	\label{deform}
	y^h\in W^{1,2}(\Omega^1,\mathbb{R}^3), \hspace{3mm} y^h(x',x_3)=(\bar{R}^h)^Tu^h(x',hx_3)-c^h,
\end{equation}
the following hold (up to a subsequence that we do not relabel):
\begin{itemize}
	\item[(i)] $y^h(x',x_3)\rightarrow x'$ in $W^{1,2}(\Omega^1,\mathbb{R}^3)$
	\item[(ii)] The scaled displacements 
\begin{equation}
	\label{scaled-disp}
	V^h(x')=\frac{1}{h^{\gamma/2}}\fint_{-1/2}^{1/2}y^h(x',t)-x'dt
\end{equation} 
converge to a vector field $V$ of the form $V=(0,0,V_3)^T$.  This convergence is strong in $W^{1,2}(\omega,\mathbb{R}^3)$.  The only non-zero out-of-plane scalar component $V_3$ of $V$ belongs to $W^{2,2}(\omega,\mathbb{R})$.  
	\item[(iii)] Moreover: 
\begin{equation}
	\label{lowerboundRescaledEnergy}
	\liminf_{h\rightarrow0}\frac{1}{h^{\gamma+2}}I_W^h(u^h)\geq\mathcal{I}_\gamma(V_3),
\end{equation}
where $\mathcal{I}_\gamma:W^{2,2}(\omega)\rightarrow\bar{\mathbb{R}}_+$ is given by:
\begin{equation}
	\label{If}
	\mathcal{I}_\gamma=\frac{1}{24}\int_{\omega}\mathcal{Q}_2(\nabla^2 V_3+\left({\rm sym}B(x')\right)_{2\times2})\,dx',
\end{equation}
and the quadratic non-degenerate form $\mathcal{Q}_2$, acting on matrices $F\in\mathbb{R}^{2\times2}$ is given by:
\begin{equation}
	\label{Q2}
	\mathcal{Q}_2(F)=\min\left\{\mathcal{Q}_3(\tilde{F}):\tilde{F}\in\mathbb{R}^{3\times3},\,\tilde{F}_{2\times2}=F \right\}.
\end{equation}
\end{itemize}
\end{theorem}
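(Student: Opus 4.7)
The plan is to follow the rigidity-based dimension reduction of Friesecke--James--M\"uller, adapted to the non-Euclidean setting in \cite{lewicka2011,lewicka2015}. The central tool is the geometric rigidity theorem applied on a covering of $\Omega^h$ by cylinders of lateral size $h$: combined with the pointwise bound $W(F)\ge c\,\text{dist}^2(F,SO(3))$ from (\ref{W1}) and the energy hypothesis $I^h_W(u^h)\le Ch^{\gamma+2}$, it produces a field $R^h\in W^{1,2}(\omega,SO(3))$ which approximates $\nabla u^h(A^h)^{-1}$ in $L^2(\Omega^h)$, together with a matching control of $\|\nabla R^h\|_{L^2(\omega)}$ at the scale dictated by $h^{\gamma+2}$. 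I define $\bar R^h$ as the nearest-point projection onto $SO(3)$ of the average of $R^h$ over $\omega$, and choose $c^h$ so that $\fint_{\Omega^1} y^h = 0$; Poincar\'e's inequality then yields $R^h\to I_3$ and $\bar R^h\to I_3$ as $h\to 0$.

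\textbf{Compactness: (i) and (ii).} Using $\nabla u^h \approx R^h A^h$ in $L^2(\Omega^h)$ and the chain rule $\nabla y^h(x',x_3)=(\bar R^h)^T \nabla u^h(x',hx_3)\,\text{diag}(1,1,h)$, one reads off the strong $L^2$-convergence $\nabla y^h \to \text{diag}(1,1,0)$, which proves (i). For (ii), the scaled displacements $V^h$ are uniformly bounded in $L^2(\omega,\mathbb{R}^3)$, and their tangential gradients are bounded in $L^2$ after division by $h^{\gamma/2}$; hence along a subsequence, $V^h\rightharpoonup V$ weakly in $W^{1,2}(\omega,\mathbb{R}^3)$. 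The structural identity $V=(0,0,V_3)^T$ follows from the Cauchy--Green expansion $(\nabla u^h)^T\nabla u^h = G^h + o(h^{\gamma/2})$: integrating in $x_3$ kills the linear-in-$x_3$ term from (\ref{Gh-N}), so after dividing by $2h^{\gamma/2}$, the symmetric tangential gradient of the in-plane part of $V^h$ converges strongly to zero in $L^2$. Korn's inequality then forces $V^h_{\text{tan}}$ to agree, in the limit, with an infinitesimal rigid motion, which is absorbed by the freedom in choosing $\bar R^h$ and $c^h$; hence $V_1=V_2=0$ and the convergence is strong in $W^{1,2}$. The $W^{2,2}$-regularity of $V_3$ follows because the tangential derivatives of the third column of $h^{-\gamma/2}(R^h-\bar R^h)$ reconstruct $\nabla V_3$ in the limit, while the $L^2$-control of $\nabla R^h$ rescaled by $h^{-\gamma/2}$ bounds $\nabla^2 V_3$ in $L^2(\omega)$.

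\textbf{Lower bound: (iii).} On the set where $|(R^h)^T\nabla u^h(A^h)^{-1}-I_3|$ is small, the expansion (\ref{SOND}) delivers
\begin{equation*}
\frac{1}{h^{\gamma+2}}W\bigl(\nabla u^h(A^h)^{-1}\bigr)\ge \frac{1}{2}\mathcal{Q}_3\!\left(\frac{Z^h}{h^{\gamma/2+1}}\right)-\frac{|Z^h|^2}{h^{\gamma+2}}\,\nu(|Z^h|),
\end{equation*}
where $Z^h=(R^h)^T\nabla u^h(A^h)^{-1}-I_3$. Using (\ref{Gh-N}) and the rigidity-based expansion of $\nabla u^h$ around $\bar R^h$, one shows that $h^{-(\gamma/2+1)}Z^h$ converges weakly in $L^2(\Omega^1,\mathbb{R}^{3\times 3})$ to a matrix whose $2\times 2$ principal minor equals $x_3\bigl(\nabla^2 V_3+(\text{sym}\,B)_{2\times 2}\bigr)$, while the entries in the third row and column are unconstrained. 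Weak lower semicontinuity of $\mathcal{Q}_3$, the integral $\int_{-1/2}^{1/2} x_3^2\,dx_3 = 1/12$, and the pointwise minimization (\ref{Q2}) that passes from $\mathcal{Q}_3$ to $\mathcal{Q}_2$ together produce (\ref{lowerboundRescaledEnergy}), with the prefactor $1/24$ emerging from the combination of these three contributions.

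\textbf{Main obstacle.} I expect the delicate step to be the identification of the weak limit of $h^{-(\gamma/2+1)}Z^h$, which requires a careful second-order expansion of $R^h$ and of $\nabla u^h(A^h)^{-1}$. Since $\gamma>2$ is arbitrary, the gap $2\gamma-(\gamma+2)=\gamma-2$ between the stretching scale ($h^{2\gamma}$) and the bending scale ($h^{\gamma+2}$) can be small, and maintaining sharp control of the error terms so that only the bending contribution survives in the limit is the central technical burden; everything else proceeds along standard lines once the rigidity estimate and the approximating rotation field are in hand.
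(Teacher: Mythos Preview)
Your outline follows the same architecture as the paper's proof: quantitative rigidity to produce $R^h$, normalization to define $\bar R^h$ and $y^h$, compactness of $V^h$ via $W^{1,2}$-bounds, Korn's inequality to kill the in-plane components, and weak lower semicontinuity of $\mathcal Q_3$ on the rescaled strain for the liminf bound. Two points deserve correction or sharpening.

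First, your construction of $\bar R^h$ as simply $\mathbb P_{SO(3)}\fint_\omega R^h$ does not by itself guarantee $\text{skew}\fint_\omega \nabla V^h=0$, and this normalization is what lets Korn's inequality force $V_{\text{tan}}\equiv 0$ rather than an infinitesimal rigid motion that you then have to subtract after the fact. The paper uses a two-step construction: first $\tilde R^h=\mathbb P_{SO(3)}\fint_\omega R^h$, then $\hat R^h=\mathbb P_{SO(3)}\fint_{\Omega^h}(\tilde R^h)^T\nabla u^h$, and finally $\bar R^h=\tilde R^h\hat R^h$. With this choice $(\bar R^h)^T\fint_{\Omega^h}\nabla u^h$ is automatically symmetric, which is exactly the normalization needed. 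The strong $W^{1,2}$ convergence of $V^h$ is then obtained not through Korn but by comparing $\nabla V^h$ with the matrix field $D^h=h^{-\gamma/2}\fint((\bar R^h)^TR^hA^h-I_3)$, which is bounded in $W^{1,2}$ and hence compact in $L^2$.

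Second, and more substantively, your claimed form of the weak limit in (iii) is incomplete. The weak $L^2$ limit $P$ of $h^{-(\gamma/2+1)}Z^h$ has $2\times 2$ block
\[
P(x',x_3)_{2\times 2}=x_3\bigl(-\nabla^2 V_3-(\text{sym}\,B)_{2\times 2}\bigr)+P_0(x')_{2\times 2},
\]
with an \emph{a priori unknown} $x_3$-independent term $P_0$. You cannot identify $P_0$; you discard it at the end via $\int_{-1/2}^{1/2}\mathcal Q_2(x_3 A+P_0)\,dx_3=\tfrac{1}{12}\mathcal Q_2(A)+\mathcal Q_2(P_0)\ge\tfrac{1}{12}\mathcal Q_2(A)$. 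Moreover, the paper does not identify the $x_3$-dependence of $P$ by a direct expansion of $\nabla u^h$ as you suggest, but by a difference-quotient argument in the transversal variable: one introduces $f^{s,h}(x)=h^{-(\gamma/2+1)}s^{-1}\bigl(y^h(x+se_3)-y^h(x)-hse_3\bigr)$, shows it converges to $De_3$ (where $D$ is the skew-valued weak $W^{1,2}$ limit of $h^{-\gamma/2}((\bar R^h)^TR^h-I_3)$, with $D_{3\alpha}=\partial_\alpha V_3$), and matches the tangential derivatives $\partial_\alpha f^{s,h}$ against the difference quotient of $P$ to read off $(\partial_3 P)e_\alpha=\partial_\alpha(De_3)-Be_\alpha$. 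This is precisely the step you flagged as the obstacle, and it is indeed where the technical work lies.
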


Now, for the optimality of the energy bound in (\ref{lowerboundRescaledEnergy}) and of the scaling (\ref{Scaling}):
\begin{theorem}
	\label{Th-recovseq}  Assume (\ref{Ah-N}), (\ref{W1}), and (\ref{SOND}).  Moreover, assume that $\omega$ is simply connected and that $\gamma>2$.  Then, for every $V_3\in W^{2,2}(\omega,\mathbb{R})$, there exists a sequence of deformations $u^h\in W^{1,2}(\Omega^h,\mathbb{R}^3)$ such that the following hold:
\begin{itemize}
	\item[(i)] The sequence $y^h(x',x_3)=u^h(x',hx_3)$ converge in $W^{1,2}(\Omega^1,\mathbb{R}^3)$ to $x'$.
	\item[(ii)] $V^h(x')=\displaystyle \frac{1}{h^{\gamma/2}}\fint_{-h/2}^{h/2}\left(u^h(x',t)-x'\right)\,dt$ converge in $W^{1,2}(\Omega,\mathbb{R}^3)$ to $(0,0,V_3)^T$.
	\item[(iii)] Recalling (\ref{If}), one has:
\begin{equation*}
	\lim_{h\rightarrow0}\frac{1}{h^{\gamma+2}}I_W^h(u^h)=\mathcal{I}_\gamma(V_3).
\end{equation*}
\end{itemize}
\end{theorem}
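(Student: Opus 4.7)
The plan is to construct the recovery sequence via an explicit Cosserat-type ansatz matching the formal expansion in Section~\ref{sec:Form}. Since $\mathcal{I}_\gamma$ is continuous with respect to the $W^{2,2}(\omega,\mathbb{R})$ topology on $V_3$, a standard diagonal argument first reduces the problem to the case $V_3 \in C^\infty(\bar\omega)$.

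For such smooth $V_3$, I would take
\begin{equation*}
u^h(x',x_3) = \begin{pmatrix} x' \\ h^{\gamma/2} V_3(x') \end{pmatrix} + x_3 \begin{pmatrix} -h^{\gamma/2} \nabla V_3(x') \\ 1 \end{pmatrix} + \frac{x_3^2}{2}\, h^{\gamma/2}\, \vec{d}(x'),
\end{equation*}
where the warping field $\vec{d}:\omega\to\mathbb{R}^3$ is chosen pointwise so that the third row and column of the matrix $K(x') = -(\nabla^2 V_3)^* - \mathrm{sym}\,B + \tfrac12 (\vec{d}\otimes e_3 + e_3\otimes \vec{d})$ realize the minimum in (\ref{Q2}). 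By nondegeneracy of $\mathcal{Q}_3$ on symmetric matrices, this minimizer is unique and, for smooth $V_3$ and $B$, depends smoothly on $x'$. Conclusions (i) and (ii) then follow by direct computation: the odd-in-$x_3$ terms average out, leaving $V^h\to (0,0,V_3)^T$ in $W^{1,2}(\omega,\mathbb{R}^3)$, and $y^h(x',x_3)=u^h(x',hx_3)$ converges to $(x_1,x_2,0)^T$ strongly in $W^{1,2}(\Omega^1,\mathbb{R}^3)$.

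For (iii), I would expand $E := \nabla u^h(A^h)^{-1} - I_3$. Although $|E|=\mathcal{O}(h^{\gamma/2})$, because of the entries $\pm\partial_i V_3$ contributed to the third row/column of $\nabla u^h$, those entries are antisymmetric and cancel in the symmetric part, so $\mathrm{sym}\,E = h^{\gamma/2} x_3\, K(x') + \mathrm{h.o.t.}$ and $|\mathrm{sym}\,E| = \mathcal{O}(h^{\gamma/2+1})$. Using frame indifference and polar decomposition, $W(F)=W(\sqrt{F^TF})$ with $|\sqrt{F^TF}-I_3|\le C\bigl(|\mathrm{sym}\,E|+|E|^2\bigr) = \mathcal{O}(h^{\gamma/2+1})$ since $\gamma>2$. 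Applying (\ref{SOND}), the remainder $\nu(\cdot)|\cdot|^2$ is pointwise $o(h^{\gamma+2})$, while the leading quadratic contribution, after $\int_{-h/2}^{h/2} x_3^2\,dx_3 = h^3/12$, yields exactly $\frac{h^{\gamma+2}}{24}\int_\omega \mathcal{Q}_3(K(x'))\,dx' = h^{\gamma+2}\,\mathcal{I}_\gamma(V_3)$ by the choice of $\vec{d}$.

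The main computational obstacle lies in the bookkeeping of cross terms coming from the $h^\gamma S$ piece of $(A^h)^{-1}$ and from the quadratic corrections in the Neumann expansion of the inverse. Several such terms pointwise exceed $h^{\gamma+2}$ in magnitude for $2<\gamma\le 4$, but each must be shown to be negligible by one of three mechanisms: (a) further reduction under symmetrization, (b) cancellation by $x_3$-parity after integration through $\Omega^h$, or (c) absorption into the $\nu(\cdot)|\cdot|^2$ remainder via the sharpened bound $|\sqrt{F^TF}-I_3|=\mathcal{O}(h^{\gamma/2+1})$. Verifying these cancellations rigorously is the heart of the proof; once they are in hand, the passage to the limit and the matching of the upper bound in (iii) with the lower bound in Theorem~\ref{Th1} are straightforward.
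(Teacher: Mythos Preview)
Your proposal is correct and follows essentially the same strategy as the paper: the recovery sequence, the choice of warping field $\vec d$, the reduction to smooth $V_3$ followed by a diagonal argument, and the exploitation of frame indifference to kill the leading skew contribution are all identical. The only cosmetic difference is that the paper, instead of passing through $\sqrt{F^TF}$, multiplies on the left by the explicit rotation $R^h=e^{h^{\gamma/2}\mathcal V}$ (with $\mathcal V$ the skew matrix built from $\nabla V_3$) and expands $(R^h)^T\nabla u^h(A^h)^{-1}=I_3+h^\gamma(-\tfrac12\mathcal V^2-S)+h^{\gamma/2}x_3(-(\nabla^2V_3)^*-B+d^1\otimes e_3)+\text{h.o.t.}$ directly; this makes the bookkeeping you flag in your last paragraph entirely explicit and avoids the separate estimate $|\sqrt{F^TF}-I_3|\le C(|\mathrm{sym}\,E|+|E|^2)$, but the two routes are equivalent.
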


As a result of Theorem~{\ref{Th1}} and Theorem~{\ref{Th-recovseq}} we have:
\begin{theorem}
	\label{Th-Gamma-minim} 
	Assume (\ref{Ah-N}), (\ref{W1}), and (\ref{SOND}).  Moreover, assume that $\omega$ is simply connected and that $\gamma>2$.  Then there exist a uniform constant $C\geq0$ such that: $$e_h={\rm inf} I_W^h\leq Ch^{\gamma+2}.$$  Under this condition, for any minimizing sequence $u^h\in W^{1,2}(\Omega^h,\mathbb{R}^3)$ for $I_W^h$, i.e. when: 
\begin{equation}
	\label{limdiffIW-inf}
	\lim_{h\rightarrow0}\frac{1}{h^{\gamma+2}}\left(I_W^h(u^h)-{\rm inf}I_W^h\right)=0,
\end{equation}
the convergences (i), (ii) of Theorem~\ref{Th1} hold up to a subsequence, and the limit $V_3$ is a minimizer of the functional $I_\gamma$ defined as in (\ref{If}).

	Moreover, for any (global) minimizer $V_3$ of $I_\gamma$, there exists a minimizing sequence $u^h$, satisfying (\ref{limdiffIW-inf}) together with (i), (ii), and (iii) of Theorem~\ref{Th-recovseq}.
\end{theorem}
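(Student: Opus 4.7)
The theorem is the standard ``convergence of minimizers'' consequence of the $\Gamma$-convergence packaged by Theorem~\ref{Th1} (compactness plus lower bound) and Theorem~\ref{Th-recovseq} (recovery sequence). My plan is to combine them in three formal steps.

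For the uniform upper bound $e_h \leq Ch^{\gamma+2}$, I would apply Theorem~\ref{Th-recovseq} to the trivial choice $V_3 \equiv 0$, producing deformations $\hat u^h$ with $h^{-(\gamma+2)}I_W^h(\hat u^h) \to \mathcal{I}_\gamma(0)$. Since $\mathcal{I}_\gamma(0) = \frac{1}{24}\int_\omega \mathcal{Q}_2(({\rm sym}\,B)_{2\times2})\,dx' < \infty$ and $e_h \leq I_W^h(\hat u^h)$, the bound follows for all small $h$.

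For the first main assertion, I take a minimizing sequence $u^h$ satisfying (\ref{limdiffIW-inf}). The upper bound just established forces $I_W^h(u^h) \leq C h^{\gamma+2}$, so Theorem~\ref{Th1} yields rotations $\bar R^h$, translations $c^h$, and a limit $V_3 \in W^{2,2}(\omega,\mathbb{R})$ satisfying (i), (ii) together with $\liminf_h h^{-(\gamma+2)} I_W^h(u^h) \geq \mathcal{I}_\gamma(V_3)$. To see that $V_3$ is a minimizer of $\mathcal{I}_\gamma$, I would fix an arbitrary competitor $\tilde V_3 \in W^{2,2}(\omega,\mathbb{R})$, use Theorem~\ref{Th-recovseq} to obtain $\tilde u^h$ with $h^{-(\gamma+2)} I_W^h(\tilde u^h) \to \mathcal{I}_\gamma(\tilde V_3)$, and chain inequalities through the infimum:
\[
\mathcal{I}_\gamma(V_3) \leq \liminf_h \frac{I_W^h(u^h)}{h^{\gamma+2}} = \liminf_h \frac{\inf I_W^h}{h^{\gamma+2}} \leq \limsup_h \frac{I_W^h(\tilde u^h)}{h^{\gamma+2}} = \mathcal{I}_\gamma(\tilde V_3),
\]
where the middle equality uses (\ref{limdiffIW-inf}).

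For the second assertion, given a global minimizer $V_3$ of $\mathcal{I}_\gamma$, I let $u^h$ be the recovery sequence from Theorem~\ref{Th-recovseq}, so (i), (ii), (iii) hold by construction. The remaining point is (\ref{limdiffIW-inf}). The bound $\limsup_h h^{-(\gamma+2)} \inf I_W^h \leq \mathcal{I}_\gamma(V_3)$ is immediate from $\inf I_W^h \leq I_W^h(u^h)$. For the matching lower bound I would choose, for each $h$, an almost-minimizer $v^h$ with $I_W^h(v^h) \leq \inf I_W^h + h^{\gamma+3}$; this sequence still satisfies the scaling hypothesis (\ref{Scaling}), so Theorem~\ref{Th1} produces some $\tilde V_3$ with $\liminf_h h^{-(\gamma+2)} \inf I_W^h = \liminf_h h^{-(\gamma+2)} I_W^h(v^h) \geq \mathcal{I}_\gamma(\tilde V_3)$, and the \emph{global} minimality of $V_3$ upgrades the right-hand side to $\mathcal{I}_\gamma(V_3)$. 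This closes the gap and gives (\ref{limdiffIW-inf}). There is no genuine obstacle here; the only bookkeeping to watch is that the rotations and translations $\bar R^h$, $c^h$ appearing in Theorem~\ref{Th1} do not change $I_W^h$, by frame indifference of $W$ and the dependence of $I_W^h$ on $\nabla u^h$ alone, so the comparisons with the recovery sequences go through without further adjustment.
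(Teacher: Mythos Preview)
Your proposal is correct and matches the paper's approach: the paper does not give a separate proof of this theorem at all, simply stating it ``as a result of Theorem~\ref{Th1} and Theorem~\ref{Th-recovseq}'', i.e.\ as the standard $\Gamma$-convergence corollary, which is exactly the argument you spell out. The only small points to tighten are (a) when you invoke Theorem~\ref{Th1} on the almost-minimizers $v^h$, first pass to a subsequence realizing the $\liminf$ before extracting the compactness subsequence, and (b) your closing remark about frame indifference is unnecessary, since the energies $I_W^h(u^h)$ themselves are being compared and the rotations $\bar R^h$, translations $c^h$ enter only in the description of the limiting displacement, not in the energy values.
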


\section{Proof of Theorem~{\ref{Th1}}.}
\label{sec:proof1}
\begin{proof}
This proof will be separated in different sections.  In Section~\ref{subsec:bRh}, we obtain the rotations $\bar{R}^h$ required by Theorem~{\ref{Th1}}.  In Sections~\ref{subsec:iTh1}, \ref{subsec:iiTh1}, and \ref{subsec:iiiTh1} we present the proof of statements (i), (ii), and (iii) respectively.
\subsection{Construction of the rotations $\bar{R}^h$.} 
\label{subsec:bRh}
First, we quote the following approximation result (Theorem~\ref{FRM-thm}), which can be directly obtained from the geometric rigidity estimate found in Theorem~1.6 in \cite{FRM2002}, in view of the following bounds:
\begin{align*}
&\left.Var(A^h)=\norm{\nabla_{t}(A^h\right|_{x_3=0})}_{L^\infty(\omega)}+\norm{\partial_3A^h}_{L^\infty(\Omega^h)}\leq Ch^{\frac{\gamma}{2}},\\
&\norm{A^h}_{L^\infty(\Omega^h)}+\norm{(A^h)^{-1}}_{L^\infty(\Omega^h)}\leq C.\notag
\end{align*}
\begin{theorem}
\label{FRM-thm}
	Let $u^h\in W^{1,2}(\Omega^h,\mathbb{R}^3)$ satisfy $\displaystyle\lim_{h\rightarrow0}\frac{1}{h^2}I_W^h(u^h)=0$, which is, in particular, implied by (\ref{Scaling}).  Then, there exist matrix fields $R^h\in W^{1,2}(\omega,\mathbb{R}^{3\times3})$, such that $R^h(x')\in SO(3)$ for a.e. $x'\in\omega$, and:
\begin{equation}
	\label{rigid}
\frac{1}{h}\int_{\Omega^h}\left|\nabla u^h(x)-R^h(x')A^h(x)\right|^2dx\leq Ch^{2+\gamma}, \hspace{4mm}\int_{\omega}\left|\nabla R^{h}\right|^2\,dx'\leq Ch^\gamma.
\end{equation}
\end{theorem}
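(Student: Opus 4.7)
The plan is to deduce the estimate from the classical geometric rigidity theorem of Friesecke, James, and M\"uller (Theorem~1.6 of \cite{FRM2002}) via a cube decomposition of $\Omega^h$, leveraging the two bounds on $A^h$ stated just before the theorem: $\|A^h\|_{L^\infty}+\|(A^h)^{-1}\|_{L^\infty}\leq C$ and $\mathrm{Var}(A^h)\leq Ch^{\gamma/2}$. The starting point is the pointwise distance bound that comes from the nondegeneracy \eqref{W1}: since $W(F)\geq c\,\mathrm{dist}^2(F,SO(3))$, one has
$$\int_{\Omega^h}\mathrm{dist}^2\!\big(\nabla u^h(A^h)^{-1},SO(3)\big)\,dx \leq Ch\,I_W^h(u^h),$$
which is $o(h^3)$ under the hypothesis $\lim_{h\to 0}h^{-2}I_W^h(u^h)=0$ and $\mathcal{O}(h^{\gamma+3})$ under \eqref{Scaling}.

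First I would partition $\omega$ into squares $Q_a$ of side $h$, yielding cylinders $C_a=Q_a\times(-h/2,h/2)$ that tile $\Omega^h$ and all have the same aspect ratio. On each $C_a$, apply FJM rigidity to the elastic tensor $F=\nabla u^h(A^h)^{-1}$, obtaining a local rotation $\tilde R_a\in SO(3)$ with
$$\int_{C_a}\big|\nabla u^h(A^h)^{-1}-\tilde R_a\big|^2\,dx \leq C\int_{C_a}\mathrm{dist}^2\!\big(\nabla u^h(A^h)^{-1},SO(3)\big)\,dx,$$
where $C$ is uniform in $a$ and $h$ because all cubes are self-similar. Transferring through the uniform bound $\|A^h\|_{L^\infty}\leq C$ gives $\int_{C_a}|\nabla u^h-\tilde R_a A^h|^2\,dx\leq C\int_{C_a}\mathrm{dist}^2(\cdot,SO(3))\,dx$; the slow variation $\mathrm{Var}(A^h)=\mathcal{O}(h^{\gamma/2})$ inside each cube is absorbed into the constant. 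Summing over $a$ and dividing by $h$ produces the first claimed inequality, with $R^h(x')=\tilde R_a$ on $Q_a$ (piecewise constant at this stage).

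Next I would upgrade the piecewise-constant $R^h$ to a map in $W^{1,2}(\omega,SO(3))$. Adjacent cube rotations satisfy $|\tilde R_a-\tilde R_b|^2\leq Ch^{-3}\int_{C_a\cup C_b}|\nabla u^h-\tilde R A^h|^2\,dx$ for the optimal global rotation $\tilde R$; summing over the $\mathcal{O}(h^{-2})$ neighboring pairs (each cube appearing a bounded number of times) gives $\sum_{a\sim b}|\tilde R_a-\tilde R_b|^2\leq Ch^{-3}\cdot h^{\gamma+3}=Ch^\gamma$. Mollifying $R^h$ at scale $h$ and projecting pointwise onto $SO(3)$ (the projection is well-defined because the local rotations only vary by $\mathcal{O}(h^{\gamma/2})$, keeping the mollified field in a tubular neighborhood of $SO(3)$) yields $R^h\in W^{1,2}(\omega,SO(3))$ with $\int_\omega|\nabla R^h|^2\,dx'\leq Ch^\gamma$, which is the second claimed inequality.

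The main technical obstacle I expect is preserving the $SO(3)$-valued constraint throughout the gluing-and-mollification step while retaining the $W^{1,2}$-gradient estimate, since the FJM rigidity only guarantees local rotations and the naive glued object is only piecewise constant. This is ultimately handled by the smallness of the rotation variations between adjacent cubes, which keeps the intermediate mollified matrix field uniformly inside the tubular neighborhood of $SO(3)$ where the nearest-point projection is smooth and $1$-Lipschitz, so the projection step contributes no worse than the constants already in play and the quantitative bounds carry through unchanged.
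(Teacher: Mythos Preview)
The paper does not actually prove this theorem; it only states that the result ``can be directly obtained from the geometric rigidity estimate found in Theorem~1.6 in \cite{FRM2002}, in view of'' the $L^\infty$ and variation bounds on $A^h$ displayed immediately before the statement. Your sketch is exactly the standard cube-decomposition argument that underlies such a citation, and its overall logic (local FJM, neighbor comparison, mollification plus projection onto $SO(3)$) is correct and is what the authors are invoking.

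One technical point deserves care. You write that on each cube $C_a$ you ``apply FJM rigidity to the elastic tensor $F=\nabla u^h(A^h)^{-1}$.'' As written this is not licit: $\nabla u^h(A^h)^{-1}$ is not a gradient field because $A^h$ depends on $x$, and Theorem~1.6 of \cite{FRM2002} applies only to gradients. The standard remedy (and the reason the paper records the bound $\mathrm{Var}(A^h)\leq Ch^{\gamma/2}$) is to freeze $A^h$ at a point $x_a\in C_a$, set $A_a=A^h(x_a)$, and apply FJM to the genuine gradient $\nabla u^h\,A_a^{-1}$ (equivalently, to $\nabla u^h$ with the target set $SO(3)A_a$). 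One then replaces $A_a$ by $A^h(x)$ using $|A^h(x)-A_a|\leq Ch\cdot h^{\gamma/2}$ on $C_a$, which contributes an error of the same order $h^{\gamma+2}$ as the main term. This is presumably what you intended by ``the slow variation \dots\ is absorbed into the constant,'' but the freezing step should be made explicit. A second minor point: the claim that the mollified field stays in a tubular neighborhood of $SO(3)$ is not immediate from the $L^2$ bound $\sum_{a\sim b}|\tilde R_a-\tilde R_b|^2\leq Ch^\gamma$ alone; in the full FJM argument this is handled either by an additional $L^\infty$ estimate on neighbor differences or by projecting first and mollifying afterwards.
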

  In order to achieve the proof of compactness in Theorem~{\ref{Th1}}, we outline similar arguments as in \cite{lewicka2015}, emphasizing the differences and new outcomes of this paper.

	Assume (\ref{Scaling}) and let $R^h\in W^{1,2}(\omega,\mathbb{R}^{3\times3})$ be the matrix fields given by Theorem~\ref{FRM-thm}. Define the averaged rotations: $\tilde{R}^h=\mathbb{P}_{SO(3)}\displaystyle\fint_{\omega}R^h$.  These projections of $\displaystyle\fint_{\omega}R^h$ onto $SO(3)$ are well defined for small $h$ since, by (\ref{rigid}) and Poincar\'e's inequality, we have:
\begin{equation*}
{\rm dist}^2\big(\fint_{\omega}R^h,SO(3)\big)\leq C\fint_{\omega}\left|\nabla R^h\right|^2\,dx'\leq Ch^\gamma,
\end{equation*}
which together with Poincar\'e's inequality, deliver:
\begin{equation}
	\label{1projwelldef}
	\int_{\omega}\left|R^h-\tilde{R}^h\right|^2\leq C\big(\int_{\omega}\big|R^h-\fint_{\omega}R^h\big|^2+{\rm dist}^2\big(\fint_{\omega}R^h,SO(3)\big)\big)\leq Ch^\gamma.
\end{equation}
	Now, let:
\begin{equation}
	\label{projhat}
	\hat{R}^h=\mathbb{P}_{SO(3)}\fint_{\Omega^h}(\tilde{R}^h)^T\nabla u^h,
\end{equation}
which is well defined for small $h$ because:
\begin{align}
	\label{2projwelldef}
	{\rm dist}^2&\big(\fint_{\Omega^h}{(\tilde{R}^h)}^T\nabla u^h,SO(3)\big)\notag\\
	&\leq C\big(\fint_{\Omega^h}\left|\nabla u^h-R^hA^h\right|^2+\fint_{\Omega^h}\left|A^h-I_3\right|^2+\fint_{\Omega^h}|R^h-\tilde{R}^h|^2\big)\notag\\	&\leq Ch^\gamma,
\end{align}
where we used (\ref{rigid}), (\ref{Ah-N}), and (\ref{1projwelldef}).  Consequently, using the fact that $I_3=\mathbb{P}_{SO(3)}I_3$, we also obtain: 
\begin{align}
\label{hatR-id}
	|\hat{R}^h-I_3|^2&\leq C\big({\rm dist}^2\big(\fint_{\Omega^h}(\tilde{R}^h)^T\nabla u^h,SO(3)\big)+\big|\fint_{\Omega^h}(\tilde{R}^h)^T\nabla u^h-I_3\big|^2\big) \notag\\
&\leq Ch^\gamma. 
\end{align}
We may now define:
\begin{equation}
	\label{projbar}
	\bar{R}^h=\tilde{R}^h\hat{R}^h.
\end{equation}

By (\ref{1projwelldef}), (\ref{hatR-id}), and (\ref{rigid}), it follows that:
\begin{equation}
	\label{3projwelldef}
	\int_{\omega}\left|R^h-\bar{R}^h\right|^2\leq C\Big(\int_{\omega}|R^h-\tilde{R}^h|^2+\int_{\omega}|\tilde{R}^h(I_3-\hat{R}^h)|^2\Big)\leq Ch^\gamma.
\end{equation}

Now, consider:
\begin{equation}
	\label{limitId}
	\norm{(\bar{R}^h)^TR^h-I_3}^2_{W^{1,2}(\omega)}=\int_{\omega}\left|R^h-\bar{R}^h\right|^2+\int_{\omega}\left|\nabla R^h\right|^2\leq Ch^\gamma,
\end{equation}
and observe that from (\ref{limitId}) we conclude:
\begin{equation}	
	\label{limitRbarTR=I_3}
\lim_{h\rightarrow0}(\bar{R}^h)^TR^h=I_3 \hspace{3mm}\text{ in $W^{1,2}(\omega,\mathbb{R}^{3\times3})$}.
\end{equation}

\begin{lemma}
There exist vectors $c^h\in\mathbb{R}^3$ such that, for the re-scaled averaged displacement $V^h$ defined as in (\ref{scaled-disp}): $$V^h(x')=\displaystyle\frac{1}{h^{\gamma/2}}\fint_{-1/2}^{1/2}(\bar{R}^h)^Tu^h(x',ht)-c^h-x'dt,$$ it holds:
	\begin{equation}
	\label{rescaled-V}
		\int_{\omega}V^h\,dx'=0, \hspace{4mm}\text{skew}\int_{\omega}\nabla V^h\,dx'=0.
	\end{equation}
\end{lemma}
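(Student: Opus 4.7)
\smallskip

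\textbf{Proof plan.} The plan is to pick $c^h$ explicitly to enforce the first normalization, and then to observe that the skew condition is \emph{automatic} from the construction of $\bar{R}^h$ in (\ref{projbar}) --- specifically, because $\hat{R}^h$ was defined in (\ref{projhat}) as the projection onto $SO(3)$ of a certain averaged gradient.

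First, since $\bar{R}^h$ is a constant rotation independent of $x$, the translation $c^h$ only shifts the mean of $V^h$. Hence the unique choice
$$c^h = \fint_\omega \fint_{-1/2}^{1/2} (\bar{R}^h)^T u^h(x',ht)\, dt\, dx' \; - \; \fint_\omega x'\, dx'$$
makes $\int_\omega V^h\,dx' = 0$ identically. This is well-defined because $u^h\in W^{1,2}$ and the inner $t$-average yields a function in $L^2(\omega,\mathbb{R}^3)$.

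Second, since $c^h$ drops out of $\nabla V^h$, the skew condition must come from $\bar{R}^h$ itself. Rewriting the $t$-integral as an $x_3$-integral via $x_3 = ht$, I would compute
$$\int_\omega \nabla V^h\, dx' = \frac{|\omega|}{h^{\gamma/2}} \Big[ \fint_{\Omega^h} (\bar{R}^h)^T \nabla_{tan} u^h\, dx \; - \; (e_1|e_2) \Big],$$
so it suffices to show that the $2\times 2$ principal minor of $\fint_{\Omega^h} (\bar{R}^h)^T \nabla u^h$ is symmetric (the $2\times 2$ minor of $(e_1|e_2)$ being $I_2$, which is symmetric).

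The key observation --- and the main obstacle --- is to notice that this symmetry is baked into the very definition (\ref{projhat}) of $\hat{R}^h$. By the polar decomposition characterization of the projection onto $SO(3)$, for any $M$ with positive determinant one has $\mathbb{P}_{SO(3)}(M)^T M$ equal to the symmetric positive semi-definite factor in $M = QU$. Applied to $M^h := \fint_{\Omega^h}(\tilde{R}^h)^T \nabla u^h$, this gives that $(\hat{R}^h)^T M^h$ is symmetric. Since $\tilde{R}^h$ and $\hat{R}^h$ are constants, they commute with the averaging integral, and we recognize
$$\fint_{\Omega^h} (\bar{R}^h)^T \nabla u^h = (\hat{R}^h)^T (\tilde{R}^h)^T \fint_{\Omega^h} \nabla u^h = (\hat{R}^h)^T M^h,$$
which is symmetric in $\mathbb{R}^{3\times 3}$. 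Its $2\times 2$ principal minor is therefore symmetric, which delivers $\mathrm{skew}\int_\omega \nabla V^h\,dx' = 0$ and completes the proof. The only nontrivial step is recognizing the polar-decomposition identity; the remainder is bookkeeping involving a change of variables in $x_3$ and commuting constants through integrals.
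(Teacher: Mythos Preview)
Your proof is correct and follows essentially the same approach as the paper: you choose $c^h$ explicitly to kill the mean, and you derive the skew condition from the polar-decomposition characterization of $\mathbb{P}_{SO(3)}$ applied to $M^h=\fint_{\Omega^h}(\tilde R^h)^T\nabla u^h$, yielding that $(\hat R^h)^T M^h=\fint_{\Omega^h}(\bar R^h)^T\nabla u^h$ is symmetric. The paper's argument is identical in substance; it even remarks in passing, as you do, that this forces $\bar R^h=\mathbb{P}_{SO(3)}\fint_{\Omega^h}\nabla u^h$.
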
	
\begin{proof}
To ensure the first statement in (\ref{rescaled-V}), we can take: $$c^h=\frac{1}{|\omega|}\int_{\omega}\fint_{-1/2}^{1/2}\left[(\bar{R}^h)^Tu^h(x',ht)-x'\right]dtdx'.$$
For the second statement in (\ref{rescaled-V}), we observe that: 
\begin{equation}
\label{nablaVh}
\nabla V^h=\frac{1}{h^{\gamma/2}}\fint_{-1/2}^{1/2}\left[(\bar{R}^h)^T\nabla_{tan}u^h(x',ht)-(I_{3})^*\right]dt.
\end{equation}
In view of (\ref{projbar}), we obtain that: $$\displaystyle(\bar{R}^h)^T\fint_{\Omega_h}\nabla u^h=(\hat{R}^h)^T\fint_{\Omega^h}(\tilde{R}^h)^T\nabla u^h;$$ and, because of (\ref{projhat}) and (\ref{2projwelldef}), $\displaystyle (\bar{R}^h)^T\fint_{\Omega_h}\nabla u^h$ is symmetric.  Hence: $$\displaystyle\text{skew}\fint_{\omega}\nabla V^h=\displaystyle{h^{-\gamma/2}}\text{skew}\fint_{\Omega^h}(\bar{R}^h)^T\nabla u^h=0.$$  In particular, we see as well that (\ref{projbar}) is equivalent to:
\begin{equation*}
	\label{projbar2}
	\bar{R}^h=\mathbb{P}_{SO(3)}\fint_{\Omega^h}\nabla u^h.
\end{equation*}
The above result is true since for a matrix $F$ sufficiently close to $SO(3)$, its projection $R_0=\mathbb{P}_{SO(3)}F$ coincides with the unique rotation appearing in the polar decomposition of $F$, that is, $F=R_0U$ with $\text{skew}\,U=0$. 
\end{proof}
\subsection{Proof of (i) in Theorem~\ref{Th1}.}
\label{subsec:iTh1}  We use (\ref{deform}), (\ref{projbar}), (\ref{2projwelldef}), and (\ref{hatR-id}), to obtain:
\begin{align}
	\label{proof-i-1}
	&\norm{\left(\nabla y^h-I_3\right)_{3\times2}}^2_{L^2(\Omega^1)}\notag\\
	&\quad\leq \frac{1}{h}\int_{\Omega^h}\left|(\bar{R}^h)^T\nabla u^h-I_3\right|^2\notag\\
	&\quad\leq C\Big(\frac{1}{h}\int_{\Omega^h}\big|(\tilde{R}^h)^T\nabla u^h-I_3\big|^2+\frac{1}{h}\int_{\Omega^h}\big|\hat{R}^h-I_3\big|^2\Big)\notag\\
	&\quad\leq Ch^\gamma,
\end{align}
and also, by (\ref{rigid}), we get:
\begin{align}
		\label{proof-i-2}
	\norm{\partial_3 y^h}^2_{L^2(\Omega^1)}&=h\int_{\Omega^h}\left|\partial_3 u^h\right|^2\notag\\
	&\leq Ch\Big(\int_{\Omega^h}\left|\nabla u^h-R^hA^h\right|^2+\int_{\Omega^h}\left|A^h\right|^2\Big)\notag\\
	&\leq Ch^2.
\end{align}
By the first statement in (\ref{rescaled-V}), we have:
\begin{equation*}
	\label{proof-i-3}
	\int_{\Omega^1}y^h(x)-x'\,dx=h^{\gamma/2}\int_{\omega}V^h\,dx'=0;
\end{equation*}
hence, using Poincar\'e's inequality, (\ref{proof-i-1}), and (\ref{proof-i-2}), we have:
\begin{align}
	\label{proof-i-4}
	\norm{y^h(x)-x'}^2_{L^2(\Omega^1)}&\leq C\int_{\Omega^1}\left|\nabla y^h-I_3\right|^2\notag\\
	&\leq C\left(\int_{\Omega^1}\left|\nabla y^h-I_3\right|_{3\times2}^2+\int_{\Omega^1}\left|\partial_3 y^h\right|^2\right)\notag\\
	&\leq Ch^2,
\end{align}
and therefore, we obtain the convergence of $y^h$ to $x'$ in $W^{1,2}(\Omega^1)$.

\subsection{Proof of (ii) in Theorem~\ref{Th1}.}
\label{subsec:iiTh1}
\begin{lemma}
$V^h$ converges (up to a subsequence) weakly in $W^{1,2}(\omega,\mathbb{R}^3)$.
\end{lemma}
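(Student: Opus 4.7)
The plan is to show that $V^h$ is bounded in $W^{1,2}(\omega,\mathbb{R}^3)$; once that is established, weak subsequential precompactness follows by reflexivity. Since $\int_\omega V^h\,dx'=0$ by the first identity in (\ref{rescaled-V}), Poincar\'e's inequality reduces the task to a uniform $L^2(\omega)$ bound on $\nabla V^h$.

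Starting from the expression (\ref{nablaVh}) for $\nabla V^h$, I would apply Jensen's inequality inside the average and change variables $x_3 = ht$ to get
\begin{equation*}
\int_\omega |\nabla V^h|^2\,dx' \leq \frac{1}{h^{\gamma+1}}\int_{\Omega^h}\bigl|(\bar R^h)^T\nabla_{tan} u^h - (I_3)^*\bigr|^2\,dx,
\end{equation*}
so the task becomes controlling this bulk integral. The natural approach is to insert $\pm R^h A^h$ (restricted to the first two columns) and $\pm (A^h)_{3\times 2}$, obtaining the algebraic decomposition
\begin{equation*}
(\bar R^h)^T\nabla_{tan}u^h - (I_3)^* = (\bar R^h)^T\bigl[\nabla_{tan}u^h - (R^h A^h)_{3\times 2}\bigr] + \bigl[(\bar R^h)^T R^h - I_3\bigr](A^h)_{3\times 2} + \bigl[(A^h)_{3\times 2} - (I_3)^*\bigr],
\end{equation*}
so that each summand is controlled by an ingredient already proven.

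The first summand is handled by the rigidity estimate (\ref{rigid}), which gives $\int_{\Omega^h}|\nabla u^h - R^h A^h|^2 \leq Ch^{\gamma+3}$. For the second, since $\bar R^h$ and $R^h$ depend only on $x'$ and $\norm{A^h}_{L^\infty}\leq C$, integration in $x_3$ produces a factor $h$, and (\ref{limitId}) yields $\int_{\Omega^h}\bigl|\bigl[(\bar R^h)^T R^h - I_3\bigr](A^h)_{3\times 2}\bigr|^2 \leq Ch^{\gamma+1}$. For the third, expanding $A^h - I_3 = h^\gamma S + h^{\gamma/2}x_3 B$ from (\ref{Ah-N}) gives $\int_{\Omega^h}|(A^h)_{3\times 2} - (I_3)^*|^2 \leq C(h^{2\gamma+1} + h^{\gamma+3})$; since $\gamma > 2$, both exponents exceed $\gamma+1$. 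Combining via $(a+b+c)^2 \leq 3(a^2+b^2+c^2)$, the integrand is bounded by $Ch^{\gamma+1}$, and dividing by $h^{\gamma+1}$ yields $\norm{\nabla V^h}_{L^2(\omega)}\leq C$ uniformly in $h$.

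The scaling is tight precisely at the middle summand: the $h^{\gamma+1}$ estimate stemming from $\norm{(\bar R^h)^T R^h - I_3}^2_{L^2(\omega)} = O(h^\gamma)$ is exactly matched by the $h^{\gamma/2}$ normalization in the definition (\ref{scaled-disp}) of $V^h$. I anticipate no serious obstacle here: the delicate work was already carried out in Section~\ref{subsec:bRh}, where the averaged rotations $\bar R^h$ were constructed to make (\ref{limitId}) hold at the sharp scale $h^\gamma$. Given that, the present lemma reduces to bookkeeping, and the desired weak subsequential convergence follows immediately from Banach--Alaoglu.
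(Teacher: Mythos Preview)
Your proof is correct and follows essentially the same strategy as the paper: Poincar\'e's inequality (using $\int_\omega V^h = 0$), then Jensen's inequality, then an $O(h^\gamma)$ bound on the bulk integral $\int_{\Omega^1}|(\bar R^h)^T\nabla_{tan}u^h - (I_3)^*|^2$. The only difference is that the paper simply cites the already-established estimate (\ref{proof-i-1}) for this last step, whereas you re-derive it via the explicit three-term splitting through $R^h A^h$ and (\ref{limitId}); both routes are equivalent and yield the same bound.
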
 
\begin{proof}Observe that, using the first statement in (\ref{rescaled-V}) and Poincar\'e's inequality, we have:
\begin{equation*}
	\label{proof-ii-1}
	\norm{V^h}^2_{L^2(\omega)}\leq C\norm{\nabla V^h}^2_{L^2(\omega)}.
\end{equation*}
Also, by Jensen's inequality and (\ref{proof-i-1}), we obtain:
\begin{align*}
	\label{proof-ii-2}
	\norm{\nabla V^h}^2_{L^2(\omega)}&=\Big|\Big|\frac{1}{h^{\gamma/2}}\fint_{-1/2}^{1/2}\left[(\bar{R}^h)^T\nabla_{tan}u^h(x',ht)-(I_{3})^*\right]dt\Big|\Big|^2_{L^2(\omega)}\notag\\
	&\leq \frac{C}{h^{\gamma}}\int_{\Omega^1}\left|(\bar{R}^h)^T\nabla_{tan}u^h(x',ht)-(I_{3})^*\right|^2\\
	&\leq C.
\end{align*}
Therefore:
\begin{equation*}
	\label{proof-ii-3}
	\norm{V^h}^2_{W^{1,2}(\omega)}=\norm{V^h}^2_{L^2(\omega)}+\norm{\nabla V^h}^2_{L^2(\omega)}\leq C.
\end{equation*}
Hence, $V^h$ is a bounded sequence in the norm of $W^{1,2}(\omega,\mathbb{R}^3)$, which implies the $V^h$ is weakly convergent (up to a subsequence, still called $V^h$) to $V$ in $W^{1,2}(\omega,\mathbb{R}^3)$.  
\end{proof}
	Consider the matrix fields $D^h\in W^{1,2}(\omega,\mathbb{R}^{3\times3})$:
\begin{align}
	\label{Dh}
	D^h(x')&=\frac{1}{h^{\gamma/2}}\fint_{-h/2}^{h/2}(\bar{R}^h)^TR^h(x')A^h(x',t)-I_3\, dt\notag\\
	&= h^{\gamma/2}(\bar{R}^h)^TR^h(x')\,S(x')+\frac{1}{h^{\gamma/2}}\left((\bar{R}^h)^TR^h(x')-I_3\right).
\end{align}
\begin{lemma}
\label{lemma:Dhconv}
The sequence $D^h$ converges (up to a subsequence) weakly in $W^{1,2}(\omega,\mathbb{R}^{3\times3})$.
\end{lemma}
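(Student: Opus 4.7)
The plan is to prove this by showing $D^h$ is uniformly bounded in $W^{1,2}(\omega,\mathbb{R}^{3\times 3})$; then weak compactness (Banach--Alaoglu in the reflexive space $W^{1,2}$) delivers the required subsequence. I will exploit the decomposition already recorded in (\ref{Dh}),
\begin{equation*}
D^h = h^{\gamma/2}(\bar{R}^h)^T R^h\, S + h^{-\gamma/2}\bigl((\bar{R}^h)^T R^h - I_3\bigr),
\end{equation*}
and estimate each term separately. The key inputs are: $\bar{R}^h, R^h \in SO(3)$ pointwise (hence $|R^h|,|\bar{R}^h| \leq C$); smoothness of $S$ (so $\|S\|_\infty + \|\nabla S\|_\infty \leq C$); the rigidity bound $\int_\omega |\nabla R^h|^2 \leq Ch^\gamma$ from (\ref{rigid}); and the crucial estimate (\ref{limitId}), which gives $\|(\bar{R}^h)^T R^h - I_3\|_{W^{1,2}(\omega)}^2 \leq Ch^\gamma$.

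For the $L^2$ bound of $D^h$, the first summand is pointwise bounded by $Ch^{\gamma/2}$, which is uniformly $O(1)$; the second summand has $L^2$-norm at most $h^{-\gamma/2}\cdot (Ch^\gamma)^{1/2} = C$ by (\ref{limitId}). For the gradient, recall that $\bar{R}^h$ is a constant rotation in $x'$, so $\nabla[(\bar{R}^h)^T R^h] = (\bar{R}^h)^T \nabla R^h$. Then
\begin{equation*}
\nabla D^h = h^{\gamma/2}\bigl((\bar{R}^h)^T \nabla R^h\, S + (\bar{R}^h)^T R^h\, \nabla S\bigr) + h^{-\gamma/2}(\bar{R}^h)^T \nabla R^h.
\end{equation*}
The $L^2$-norm of the bracketed expression is controlled by $C(\|\nabla R^h\|_{L^2} + 1) \leq C(h^{\gamma/2}+1)$, so the first piece has $L^2$-norm bounded by $Ch^{\gamma/2}(1+h^{\gamma/2}) \leq C$; the second piece has $L^2$-norm at most $h^{-\gamma/2}\cdot Ch^{\gamma/2} = C$. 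Summing, $\|D^h\|_{W^{1,2}(\omega)} \leq C$.

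Since $W^{1,2}(\omega,\mathbb{R}^{3\times 3})$ is a reflexive Banach space, the bounded sequence $\{D^h\}$ admits a subsequence (not relabeled) that converges weakly in $W^{1,2}(\omega,\mathbb{R}^{3\times 3})$, which is the claim. There is no real obstacle here: the argument is an exercise in matching the scaling $h^{-\gamma/2}$ in the ``correction'' term against the smallness $h^{\gamma}$ of $(\bar{R}^h)^T R^h - I_3$ and $\nabla R^h$. The only point to keep straight is that $\bar{R}^h$ does not depend on $x'$, which is what allows $\nabla[(\bar{R}^h)^T R^h]$ to be rewritten as $(\bar{R}^h)^T \nabla R^h$ and thereby controlled directly via (\ref{rigid}) rather than via the weaker bound (\ref{limitId}) on $(\bar{R}^h)^T R^h - I_3$ alone.
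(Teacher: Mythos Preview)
Your proof is correct and follows essentially the same approach as the paper: both establish a uniform $W^{1,2}$ bound on $D^h$ by splitting it into the two summands in (\ref{Dh}), controlling the $L^2$-norm via $\int_\omega |R^h-\bar{R}^h|^2 \leq Ch^\gamma$ and the gradient via $\int_\omega |\nabla R^h|^2 \leq Ch^\gamma$ from (\ref{rigid}), and then invoking weak compactness. Your explicit remark that $\bar{R}^h$ is constant in $x'$ (so that $\nabla[(\bar{R}^h)^T R^h] = (\bar{R}^h)^T \nabla R^h$) is a helpful clarification that the paper leaves implicit.
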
 
\begin{proof}
	Since S is smooth and by (\ref{3projwelldef}), we have that:
\begin{align*}
&\norm{D^h}_{L^2(\omega)}^2\\
&\quad=\int_{\omega}\big|h^{\gamma/2}(\bar{R}^h)^TR^h(x')\,S(x')+\frac{1}{h^{\gamma/2}}\left((\bar{R}^h)^TR^h(x')-I_3\right)\big|^2\,dx'\notag\\
	&\quad\leq C\left(h^{\gamma}\int_{\omega}\left|S(x')\right|^2\,dx'+\frac{1}{h^{\gamma}}\int_{\omega}\left|\left(R^h(x')-\bar{R}^h\right)\right|^2\,dx'\right)\\
	&\quad\leq C.
\end{align*}
Also, by (\ref{rigid}), we have that:
\begin{align*}
&\norm{\nabla D^h}_{L^2(\omega)}^2\\
&=\int_{\omega}\big|h^{\gamma/2}(\bar{R}^h)^T\nabla\left(R^h(x')\,S(x')\right)+\frac{1}{h^{\gamma/2}}(\bar{R}^h)^T\nabla R^h(x')\big|^2\,dx'\notag\\
	&\leq C\big(h^{\gamma}\int_{\omega}\left|\nabla R^h(x')\right|^2\left| S(x')\right|^2\,dx'\\
	&\quad+h^{\gamma}\int_{\omega}\left|\nabla S(x')\right|^2\,dx'+\frac{1}{h^{\gamma}}\int_{\Omega}\left|\nabla R^h\right|^2\big)\notag\\
	&\leq C.
\end{align*}
Therefore $\norm{D^h}^2_{W^{1,2}(\omega)}\leq C$ and so, up to a subsequence still called $D^h$, we obtain:
\begin{equation}
	\label{limitDh}
	\lim_{h\rightarrow0}D^h=D \hspace{4mm}\text{weakly in $W^{1,2}(\omega,\mathbb{R}^{3\times3})$}
\end{equation}
\end{proof}
The following limit:
\begin{equation}
	\label{limitDhq}
	\lim_{h\rightarrow0}\frac{1}{h^{\gamma/2}}\left(\left(\bar{R}^h\right)^TR^h-I_3\right)=D \hspace{4mm}\text{in $L^{q}(\omega,\mathbb{R}^{3\times3})$,}
\end{equation}
for all $q\geq1$, is obtained by noticing that:
\begin{align}
	\label{limitDhq-1}
	&\Big|\Big|\frac{1}{h^{\gamma/2}}\big((\bar{R}^h)^TR^h-I_3\big)-D\Big|\Big|_{L^q(\omega)}\notag\\
	&\quad\leq\Big|\Big|\frac{1}{h^{\gamma/2}}\big((\bar{R}^h)^TR^h-I_3\big)-D^h\Big|\Big|_{L^q(\omega)}+\norm{D^h-D}_{L^q(\omega)}\notag\\
	&\quad\leq h^{\gamma/2}\norm{(\bar{R}^h)^TR^h(x')\,S(x')}_{L^q(\omega)}+\norm{D^h-D}_{L^q(\omega)}.
\end{align}
Since $SO(3)$ is a bounded set, we know that $\norm{(\bar{R}^h)^TR^h}_{L^\infty(\omega)}\leq C$ for some $C>0$; and since $S$ is smooth, we have that $\norm{S}_{L^q(\omega)}$ is also bounded.  Then, the first term on the right hand side of (\ref{limitDhq-1}) is bounded by $Ch^{\gamma/2}$.  For the second term, by Lemma~\ref{lemma:Dhconv}, the Rellich-Kondrachov theorem, and since $\omega$ is bounded, we have that $D^h$ converges strongly in $L^q(\Omega)$ to $D$, for $q\geq1$ .  Therefore we have proven (\ref{limitDhq}).

\begin{lemma}
The limiting matrix field $D$ has skew-symmetric values:
\begin{equation}
	\label{symD=0}
	{\rm sym}\,D=\lim_{h\rightarrow0}{\rm sym}\,D^h=0.
\end{equation}
\end{lemma}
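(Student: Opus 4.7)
The plan is to exploit the fact that $R^h(x')$ and $\bar R^h$ both take values in $SO(3)$, so their combination is orthogonal, and then read off $\text{sym}\,D^h$ from the resulting algebraic identity. Set $Q^h(x') := (\bar{R}^h)^T R^h(x')$, which lies in $SO(3)$ for a.e.\ $x' \in \omega$. Expanding $(Q^h)^T Q^h = I_3$ and reorganizing gives the elementary identity
\begin{equation*}
\text{sym}\bigl(Q^h - I_3\bigr) = -\tfrac{1}{2}\bigl(Q^h - I_3\bigr)^T\bigl(Q^h - I_3\bigr),
\end{equation*}
which is the source of the cancellation we need. This is the same mechanism by which the symmetric part of a small antisymmetric-looking perturbation of the identity in $SO(3)$ is always quadratic in the perturbation.

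Next, I would take the symmetric part of the definition (\ref{Dh}) of $D^h$ and substitute the identity above:
\begin{equation*}
\text{sym}\,D^h = h^{\gamma/2}\,\text{sym}\bigl(Q^h S\bigr) + \frac{1}{h^{\gamma/2}}\,\text{sym}\bigl(Q^h - I_3\bigr) = h^{\gamma/2}\,\text{sym}\bigl(Q^h S\bigr) - \frac{1}{2h^{\gamma/2}}\bigl(Q^h - I_3\bigr)^T\bigl(Q^h - I_3\bigr).
\end{equation*}
Each of the two terms will be shown to vanish in $L^1(\omega)$ as $h \to 0$. For the first, since $\|Q^h\|_{L^\infty(\omega)} \leq 1$ (as $Q^h \in SO(3)$) and $S$ is smooth on the bounded domain $\omega$, one has $\|h^{\gamma/2}\,\text{sym}(Q^h S)\|_{L^1(\omega)} \leq C h^{\gamma/2} \to 0$. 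For the second, the pointwise bound $|(Q^h - I_3)^T(Q^h - I_3)| \leq |Q^h - I_3|^2$ together with (\ref{limitId}) (or equivalently (\ref{3projwelldef})) yields $\|(Q^h - I_3)^T(Q^h - I_3)\|_{L^1(\omega)} \leq \|Q^h - I_3\|_{L^2(\omega)}^2 \leq C h^\gamma$, so dividing by $h^{\gamma/2}$ gives a bound of $C h^{\gamma/2} \to 0$.

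Finally, to identify this $L^1$-limit with $\text{sym}\,D$, I would invoke (\ref{limitDhq}) together with the bound just used on the first summand of $D^h$: it follows that $D^h \to D$ at least in $L^1(\omega, \mathbb{R}^{3\times 3})$, hence $\text{sym}\,D^h \to \text{sym}\,D$ in $L^1$ by continuity of the symmetrization map. Since we have just shown $\text{sym}\,D^h \to 0$ in $L^1$, uniqueness of limits yields $\text{sym}\,D = 0$ a.e.\ in $\omega$, which is (\ref{symD=0}). There is no serious obstacle here: the only point that requires care is recognizing the $SO(3)$ identity for $\text{sym}(Q - I_3)$, after which everything is a matter of combining the already-established $L^2$ bound on $Q^h - I_3$ of order $h^{\gamma/2}$ with the $1/h^{\gamma/2}$ prefactor to produce a net $O(h^{\gamma/2})$ decay.
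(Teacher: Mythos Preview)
Your proof is correct and follows essentially the same approach as the paper: both hinge on the $SO(3)$ identity $\text{sym}(Q-I_3)=-\tfrac12(Q-I_3)^T(Q-I_3)$ for $Q=(\bar R^h)^TR^h$, then combine it with the $O(h^{\gamma/2})$ bound on $\|Q^h-I_3\|_{L^2(\omega)}$ from (\ref{3projwelldef})/(\ref{limitId}) to kill the $1/h^{\gamma/2}$ prefactor. The only cosmetic differences are that the paper estimates $\tfrac{1}{h^{\gamma/2}}\text{sym}(Q^h-I_3)$ directly in $L^2$ (implicitly via the $W^{1,2}\hookrightarrow L^4$ embedding) rather than $\text{sym}\,D^h$ in $L^1$, and that you spell out the vanishing of the $h^{\gamma/2}\,\text{sym}(Q^hS)$ term explicitly; one tiny slip is that $\|Q^h\|_{L^\infty}\leq\sqrt{3}$ in Frobenius norm rather than $1$, which of course does not affect the argument.
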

\begin{proof}
For all $R\in SO(3)$, we have:
\begin{equation}
	\label{identR}
	(R-I_3)^T(R-I_3)=2\,I_3-2\,{\rm sym}\,R=-2\,{\rm sym}(R-I_3).
\end{equation}
Now, using (\ref{identR}), (\ref{3projwelldef}), and (\ref{rigid}), we obtain:
\begin{align*}
&\frac{1}{h^{\gamma/2}}\norm{{\rm sym}\big((\bar{R}^h)^TR^h-I_3\big)}_{L^2(\omega)}\\
	&\quad=\frac{1}{2h^{\gamma/2}}\norm{\big((\bar{R}^h)^TR^h-Id\big)^T\big((\bar{R}^h)^TR^h-I_3\big)}_{L^2(\omega)}\notag\\
	&\quad\leq C\frac{1}{h^{\gamma/2}}\norm{R^h-\bar{R}^h}^2_{W^{1,2}(\omega)}\\
	&\quad\leq Ch^{\gamma/2}.
\end{align*}	
Then, it follows that: $$\frac{1}{h^{\gamma/2}}\norm{{\rm sym}\big((\bar{R}^h)^TR^h-I_3\big)}_{L^2(\omega)}\rightarrow0$$ as $h\rightarrow0$; and, as a result, $D$ has skew-symmetric values.  
\end{proof}

By (\ref{identR}), we observe that:
\begin{align}
	\label{identDhsym}
	\frac{1}{h^{\gamma/2}}{\rm sym}D^h&={\rm sym}\left[(\bar{R}^h)^TR^h(x')\,S(x')+\frac{1}{h^{\gamma}}\left((\bar{R}^h)^TR^h(x')-I_3\right)\right]\notag\\
	&={\rm sym}\left[(\bar{R}^h)^TR^h(x')\,S(x')\right]\notag\\
	&\quad-\frac{1}{2h^{\gamma}}\left((\bar{R}^h)^TR^h-I_3\right)^T\left((\bar{R}^h)^TR^h-I_3\right).
\end{align}

First, since $S$ is smooth, we have:
\begin{align}
	\label{lqSlim}
	\norm{(\bar{R}^h)^TR^h(x')\,S(x')-S(x')}_{L^q(\omega)}&=\norm{\left[(\bar{R}^h)^TR^h(x')-I_3\right]S(x')}_{L^q(\omega)}\notag\\
	&\leq C\norm{(\bar{R}^h)^TR^h(x')-I_3}_{L^q(\omega)}.
\end{align}
Using (\ref{limitId}) and the Rellich-Kondrachov theorem, it follows that the limit, as $h\rightarrow0$, of the right hand side of (\ref{lqSlim}) is zero, for $q\geq1$.

Also, observe that:
\begin{align}
	\label{limsecpartbel}
	&\Big|\Big|\frac{1}{h^{\gamma}}\left((\bar{R}^h)^TR^h-I_3\right)^T\left((\bar{R}^h)^TR^h-I_3\right)+D^2\Big|\Big|_{L^q(\omega)}\notag\\
	&\leq\Big|\Big|\frac{1}{h^{\gamma/2}}\left((\bar{R}^h)^TR^h-I_3\right)^T\Big[\frac{1}{h^{\gamma/2}}\left((\bar{R}^h)^TR^h-I_3\right)-D\Big]\Big|\Big|_{L^q(\omega)}\notag\\
	&\quad+\Big|\Big|\Big[\frac{1}{h^{\gamma/2}}\left((\bar{R}^h)^TR^h-I_3\right)-D\Big]^TD\Big|\Big|_{L^q(\omega)}
\end{align}
approaches $0$, as $h\rightarrow0$, by (\ref{limitDhq}).  

Therefore, by (\ref{identDhsym}), (\ref{lqSlim}) and (\ref{limsecpartbel}), we obtain:
\begin{align*}
	\lim_{h\rightarrow0}\frac{1}{h^{\gamma/2}}{\rm sym}D^h&=\lim_{h\rightarrow0}{\rm sym}\big[(\bar{R}^h)^TR^h(x')\,S(x')\big]\\
	&\quad-\frac{1}{2h^{\gamma}}\left((\bar{R}^h)^TR^h-I_3\right)^T\left((\bar{R}^h)^TR^h-I_3\right)\notag\\
	&={\rm sym}\,S+\frac{1}{2}D^2 \hspace{4mm}\text{ in $L^q(\omega,\mathbb{R}^{3\times3})$, \,\,$\forall q\geq1$.}
\end{align*}

As for the convergence of $V^h$, we have by (\ref{nablaVh}) and (\ref{Dh}) that:
\begin{align}
	\label{nablaVh-Dh}
&\big[\nabla V^h(x')-D^h(x')\big]_{3\times2}\notag\\
	&=\frac{1}{h^{\gamma/2}}(\bar{R}^h)^T\fint_{-h/2}^{h/2}\left[\nabla_{tan}u^h(x',t)-R^h(x')A^h(x',t)\right]_{3\times2}dt,
\end{align}
which, together with (\ref{rigid}), imply that:
\begin{align}
	\label{nablaVh-Dh-2}
	&\norm{\big[\nabla V^h(x')-D^h(x')\big]_{3\times2}}^2_{L^2(\omega)}\notag\\
	&\leq\frac{C}{h^{\gamma+1}}\int_{\Omega^h}\left|\nabla u^h(x',t)-R^h(x')A^h(x',t)\right|^2dx\notag\\
	&\leq Ch^2,
\end{align}
and therefore, by (\ref{limitDh}), the sequence $\nabla V^h$ converges in $L^2(\omega,\mathbb{R}^{3\times2})$ to $D$.  
Observe that:
\begin{equation}
	\label{nablaV}
	\nabla_{tan} V=D_{3\times2}, 
\end{equation}
and $D\in W^{1,2}(\omega,\mathbb{R}^{3\times3})$ by (\ref{limitDh}).  Then, we obtain $V\in W^{2,2}(\omega,\mathbb{R}^3).$

Finally, we use (\ref{symD=0}) to conclude that ${\rm sym}(\nabla_{tan} V)_{2\times2}=0$, and so, by Korn's inequality:
\begin{equation*}
	\int_{\omega}\left|(\nabla_{tan} V)_{2\times2}\right|^2dx'\leq C\int_{\omega}\left|{\rm sym}(\nabla_{tan} V)_{2\times2}\right|^2dx'=0,
\end{equation*}
which implies $V_{tan}$ must be constant and, hence, equal to $0$ in view of (\ref{rescaled-V}).  Then $V=\left(0,0,V_3\right)$.  This ends the proof of (ii) in Theorem~\ref{Th1}.

\subsection{Proof of (iii) in Theorem~\ref{Th1}.}
\label{subsec:iiiTh1} To prove the lower bound (\ref{lowerboundRescaledEnergy}), we define the rescaled strains $P^h\in L^2(\Omega^1,\mathbb{R}^{3\times3})$ by:
\begin{equation*}
	\label{Ph}
	P^h(x',x_3)=\frac{1}{h^{\gamma/2+1}}\left((R^h(x'))^T\nabla u^h(x',hx_3)\left(A^h(x',hx_3)\right)^{-1}-I_3\right).
\end{equation*}
\begin{lemma}
\label{lemma:Phconv}
The rescaled strains $P^h$ converge (up to a subsequence) weakly in $L^{2}(\Omega^1,\mathbb{R}^{3\times3})$.
\end{lemma}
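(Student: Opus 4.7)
The goal is to show $\|P^h\|_{L^2(\Omega^1)} \le C$; weak $L^2$-compactness will then yield the subsequential weak limit. My plan is to algebraically reorganize $P^h$ so that the rigidity estimate (\ref{rigid}) from Theorem~\ref{FRM-thm} applies directly.

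First I would factor out the rotation. Since $(R^h)^T R^h = I_3$, I can write
\begin{equation*}
h^{\gamma/2+1} P^h(x',x_3) = (R^h(x'))^T\bigl[\nabla u^h(x',hx_3) - R^h(x') A^h(x',hx_3)\bigr]\bigl(A^h(x',hx_3)\bigr)^{-1}.
\end{equation*}
Using $|R^h|_\infty \le C$ (since $R^h \in SO(3)$ pointwise) and the uniform bound $\|(A^h)^{-1}\|_{L^\infty(\Omega^h)} \le C$ stated just before Theorem~\ref{FRM-thm}, this gives the pointwise estimate
\begin{equation*}
h^{\gamma+2} |P^h(x',x_3)|^2 \le C\, |\nabla u^h(x',hx_3) - R^h(x') A^h(x',hx_3)|^2.
\end{equation*}

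Next I would integrate over $\Omega^1$ and change variables $t = hx_3$ (with $dx_3 = dt/h$) to convert the integral into one over $\Omega^h$:
\begin{equation*}
h^{\gamma+2}\int_{\Omega^1}|P^h|^2\,dx'dx_3 \le \frac{C}{h}\int_{\Omega^h}|\nabla u^h - R^h A^h|^2\,dx.
\end{equation*}
The right-hand side is precisely the quantity controlled by the first inequality in (\ref{rigid}), which bounds it by $C h^{2+\gamma}$. Dividing through by $h^{\gamma+2}$ yields $\|P^h\|_{L^2(\Omega^1)}^2 \le C$ uniformly in $h$.

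Finally, boundedness in the reflexive space $L^2(\Omega^1,\mathbb{R}^{3\times 3})$ together with the Banach--Alaoglu theorem delivers a (not relabeled) subsequence with $P^h \rightharpoonup P$ weakly in $L^2(\Omega^1,\mathbb{R}^{3\times 3})$. There is no real obstacle here beyond the algebraic factorization that exposes $\nabla u^h - R^h A^h$; everything else is the rigidity estimate combined with a standard rescaling in the $x_3$ variable, paralleling (\ref{proof-i-2}) and (\ref{nablaVh-Dh-2}) earlier in the proof. The identification of the weak limit $P$ and its structural properties (for instance, relating it back to $D$ and to the quadratic form $\mathcal{Q}_2$) is presumably the content of subsequent lemmas and is not required for this statement.
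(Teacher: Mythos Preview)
Your proof is correct and follows essentially the same route as the paper: bound $\|P^h\|_{L^2(\Omega^1)}$ via the algebraic identity $h^{\gamma/2+1}P^h=(R^h)^T[\nabla u^h-R^hA^h](A^h)^{-1}$, the uniform bounds on $R^h$ and $(A^h)^{-1}$, the change of variables $t=hx_3$, and the rigidity estimate (\ref{rigid}), then invoke weak $L^2$-compactness. In fact your write-up makes the factorization explicit and avoids a small typo in the paper's displayed bound (\ref{L2boundPh}), where the factor $R^h$ in $\nabla u^h-R^hA^h$ was accidentally dropped.
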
 
\begin{proof}
	Observe that, since $A^h$ is smooth, $R^h\in SO(3)$, and using (\ref{rigid}), we have:
\begin{equation}
	\label{L2boundPh}
	\norm{P^h}^2_{L^2(\Omega^1)}\leq\frac{C}{h^{\gamma+3}}\int_{\Omega^h}\left|\nabla u^h(x',t)-A^h(x',t)\right|^2dx'dt\leq C.
\end{equation}
Then, up to a subsequence, we get:
\begin{equation}
	\label{limPh}
	\lim_{h\rightarrow0}P^h=P\hspace{5mm} \text{weakly in $L^2(\Omega^1,\mathbb{R}^{3\times3})$.}
\end{equation}
\end{proof}
	We follow similar steps as in \cite{lewicka2011} to obtain a useful property of the limiting strain $P$.  First, by (\ref{deform}), we obtain:
\begin{align}
\label{convP}
\frac{\left(\partial_3y^h-he_3\right)}{h^{\gamma/2+1}}&=\frac{1}{h^{\gamma/2}}(\bar{R}^h)^T\left[\nabla u^h(x',hx_3)-R^h(x')A^h(x',hx_3)\right]e_3\notag\\
&\quad+\frac{1}{h^{\gamma/2}}(\bar{R}^h)^TR^h(x')\left[A^h(x',hx_3)-I_3\right]e_3\notag\\
&\quad+\frac{1}{h^{\gamma/2}}\left[(\bar{R}^h)^TR^h(x')-I_3\right]e_3.
\end{align}

Let's study all three terms on the right-hand side of (\ref{convP}).  For the first term, using (\ref{rigid}), we get:
\begin{align*}
\frac{1}{h^\gamma}&\int_{\Omega^1}\left|(\bar{R}^h)^T\left[\nabla u^h(x',hx_3)-R^h(x')A^h(x',hx_3)\right]e_3\right|^2dx'dx_3\\
&\leq\frac{1}{h^{\gamma+1}}\int_{\Omega^h}\left|\nabla u^h(x',t)-R^h(x')A^h(x',t)\right|^2dx'dx_3\notag\\
&\leq Ch^2.
\end{align*}
For the second term, we have:
\begin{align*}
\frac{1}{h^\gamma}&\int_{\Omega^1}\left|(\bar{R}^h)^TR^h(x')\left[A^h(x',hx_3)-I_3\right]e_3\right|^2dx'dx_3\\
&\leq\int_{\Omega^1}\left|h^{\gamma/2} S(x')+hx_3B(x')\right|^2dx'dx_3\\
	&\leq Ch^2.
\end{align*}
And finally, for the third term, using (\ref{limitDhq}), we have that it converges to $De_3$ in $L^2(\omega)$.  Therefore:
\begin{equation}
	\label{limconvP}
	\lim_{h\rightarrow0}\frac{1}{h^{\gamma/2+1}}\left(\partial_3y^h-he_3\right)=De_3 \hspace{3mm}\text{ in $L^2(\Omega^1, \mathbb{R}^{3})$}.
\end{equation}

	For each small $s>0$, we define the sequence of functions $f^{s,h}\in W^{1,2}(\Omega^1,\mathbb{R}^3)$:
\begin{equation}
	\label{fsh}
	f^{s,h}(x)=\frac{1}{h^{\gamma/2+1}}\frac{1}{s}(y^h(x+se_3)-y^h(x)-hse_3).
\end{equation}
Clearly, (\ref{fsh}) is equivalent to $$f^{s,h}(x)=\displaystyle\frac{1}{h^{\gamma/2+1}}\fint_0^s(\partial_3y^h(x+te_3)-he_3)dt,$$ and, by (\ref{limconvP}), we have that:
\begin{equation}
\label{limfsh-2}
\lim_{h\rightarrow0}f^{s,h}=De_3 \hspace{3mm}\text{ in $L^2(\Omega^1,\mathbb{R}^3)$}.
\end{equation}
Also, observe that $\partial_3f^{s,h}(x)=\displaystyle\frac{1}{s}\frac{1}{h^{\gamma/2+1}}\left(\partial_3y^h(x+se_3)-\partial_3y^h(x)\right)$.  Then:
\begin{align*}
	\norm{\partial_3f^{s,h}(x)}_{L^2(\Omega^1)}&\leq\frac{1}{|s|}\norm{\frac{1}{h^{\gamma/2+1}}\left(\partial_3y^h(x+se_3)-he_3\right)-De_3}_{L^2(\Omega^1)}\\
&\quad+\frac{1}{|s|}\norm{\frac{1}{h^{\gamma/2+1}}\left(\partial_3y^h(x)-he_3\right)-De_3}_{L^2(\Omega^1)}
\end{align*}
goes to $0$ as $h\rightarrow0$, by (\ref{limconvP}).    In other words:
\begin{equation}
\label{limpartialfsh}
\lim_{h\rightarrow0}\partial_3f^{s,h}=0 \hspace{3mm}\text{ in $L^2(\Omega^1,\mathbb{R}^3)$}.
\end{equation}
Further, for any $\alpha=1,2$, we have:
\begin{align*}
\partial_\alpha f^{s,h}(x)&=\frac{1}{s}(\bar{R}^h)^TR^h(x')\big[P^h(x',x_3+s)A^h(x',hx_3+hs)\notag\\
&\quad-P^h(x',x_3)A^h(x',hx_3)\\
&\quad+\frac{1}{h^{\gamma/2+1}}\left(A^h(x',hx_3+hs)-A^h(x',hx_3)\right)\big]e_\alpha;
\end{align*}
which, in view of (\ref{limPh}), (\ref{limitRbarTR=I_3}), and since $A^h\rightarrow I_3$ strongly in $L^2(\omega,\mathbb{R}^{3\times3})$, yields the weak convergence in $L^2(\Omega^1,\mathbb{R}^{3\times2})$ of $\partial_\alpha f^{s,h}(x)$:
\begin{align}
\label{limpartialalphafsh}
\lim_{h\rightarrow0}\partial_\alpha f^{s,h}(x)&=\frac{1}{s}\left(P(x',x_3+s)-P(x',x_3)\right)e_\alpha\notag\\
&\quad+\frac{1}{s}\lim_{h\rightarrow0}\frac{1}{h^{\gamma/2+1}}\left(I_3+h^\gamma S(x')+h^{\gamma/2+1}(x_3+s)B(x')\right.\notag\\
	&\quad\left.-(I_3+h^\gamma S(x')+h^{\gamma/2+1}x_3B(x'))\right)e_\alpha\notag\\
&=\frac{1}{s}\left(P(x',x_3+s)-P(x',x_3)\right)e_\alpha+B(x')e_\alpha.
\end{align}
Consequently, by (\ref{limfsh-2}), (\ref{limpartialfsh}), and (\ref{limpartialalphafsh}), we see that $f^{s,h}$ converges weakly in $W^{1,2}(\omega,\mathbb{R}^3)$ to $De_3$.  Hence, the left-hand side in (\ref{limpartialalphafsh}) equals $\partial_\alpha(De_3)$:
\begin{equation*}
\label{matchDpartialalphafsh}
	\partial_\alpha(De_3)=\frac{1}{s}\left(P(x',x_3+s)-P(x',x_3)\right)e_\alpha+B(x')e_\alpha.
\end{equation*}
Also, we notice that:
\begin{equation*}
\label{partialPalpha}
	(\partial_3 P)e_\alpha=\lim_{s\rightarrow0}\Big(\frac{P(x',x_3+s)-P(x',x_3)}{s}\Big)e_\alpha=\partial_\alpha(De_3)-B(x')e_\alpha;
\end{equation*}
and consequently:
\begin{equation}
\label{P3-2}
	P(x)_{3\times2}=\left(\nabla(D(x')e_3)-B(x')\right)_{3\times2}x_3+P_0(x')_{3\times2},
\end{equation}
for some $P_0\in L^2(\omega,\mathbb{R}^{3\times3})$.

We can now finish the proof of Theorem~\ref{Th1}.   Observe that, from (\ref{SOND}), and using the Taylor expansion of the function $W(F)$ close to $F=I_3$, we obtain:
\begin{align}
\label{limrescaledW}
\frac{1}{h^{\gamma+2}}&W\left(\nabla u^h(x)(A^h(x))^{-1}\right)\notag\\
&=\frac{1}{h^{\gamma+2}}W\left((R^h(x))^T\nabla u^h(x)(A^h(x))^{-1}\right)\notag\\
&=\frac{1}{h^{\gamma+2}}W\left(Id+h^{\gamma/2+1}P^h(x)\right)\notag\\
&=\frac{1}{2}\mathcal{Q}_3(P^h(x))+\nu\left(h^{\gamma/2+1}|P^h|\right)\mathcal{O}(|P^h(x)|^2).
\end{align}
Consider the sets $\mathcal{U}_h=\left\{x\in\Omega^1\,:\,h\left|P^h(x',x_3)\right|\leq1\right\}$.  Note that, using H\"older's inequality and (\ref{rigid}), we have:
\begin{align}
	\label{hPh1}
	&\int_{\Omega^1}|hP^h|dx\notag\\
	&\quad=\frac{1}{h^{\gamma/2+1}}\int_{\Omega^h}\left|(R^h(x'))^T\nabla u^h(x',t)\left(A^h(x',t)\right)^{-1}-Id\right|dx\notag\\
	&\quad\leq\frac{C}{h^{\gamma/2+1}}\Big(\int_{\Omega^h}\left|\nabla u^h(x',hx_3)-R^h(x')A^h(x',hx_3)\right|^2dx\Big)^{1/2}\notag\\	
	&\quad\leq Ch^{1/2}.
\end{align}
Since, by (\ref{hPh1}), $hP^h$ converges to $0$ in $L^1(\Omega^1)$ as $h\rightarrow0$, then there exists a subsequence (which we call again $hP^h$) such that $hP^h$ converges to $0$ pointwise a.e.

Now, for that subsequence observe that, by (\ref{hPh1}), we obtain:
\begin{align*}
	&\int_{\Omega^1}|\chi_{\mathcal{U}_h}-1|^2dx=\int_{\Omega^1\setminus \mathcal{U}_h}1dx\\
	&\quad\leq \int_{\Omega^1\setminus \mathcal{U}_h}|hP^h|dx\leq \int_{\Omega^1}|hP^h|dx\leq Ch^{1/2}.
\end{align*}
And therefore, $\chi_{\mathcal{U}_h}$ converges to $1$ in $L^2(\Omega^1)$ as $h\rightarrow0$.

	Since $\displaystyle \lim_{t\rightarrow0}\nu(t)=0$, using (\ref{limrescaledW}), and the properties of $\mathcal{Q}_3$, we get:
\begin{align}
	\label{lb-1}
	&\liminf_{h\rightarrow0}\frac{1}{h^{\gamma+2}}I_W^h(u^h)\notag\\
&\quad\geq\liminf_{h\rightarrow0}\frac{1}{h^{\gamma+2}}\int_{\Omega^1}\chi_{\mathcal{U}_h}W\Big(\nabla u^h(x',hx_3)\left(A^h(x',hx_3)\right)^{-1}\Big)dx\notag\\
	&\quad=\liminf_{h\rightarrow0}\Big(\frac{1}{2}\int_{\Omega^1}\mathcal{Q}_3(\chi_{\mathcal{U}_h}P^h(x))dx+o(1)\int_{\Omega^1}|P^h(x)|^2dx\Big)\notag\\
	&\quad\geq\frac{1}{2}\int_{\Omega^1}\mathcal{Q}_3({\rm sym}P(x))dx,
\end{align}
where the convergence to $0$ of the term $\displaystyle o(1)\int_{\Omega^1}|P^h(x)|^2dx$ as $h\rightarrow0$ follows from (\ref{rigid});  and since, by (\ref{limPh}), $\chi_{\mathcal{U}_h}P^h(x)$ converges weakly to $P$ in $L^2(\Omega^1,\mathbb{R}^{3\times3})$.  

	Further, by (\ref{Q2}) and (\ref{P3-2}):
\begin{align}
	\label{lb-2}
	\frac{1}{2}&\int_{\Omega^1}\mathcal{Q}_3({\rm sym}P(x))dx\notag\\
	&\geq
\frac{1}{2}\int_{\Omega^1}\mathcal{Q}_2({\rm sym}P_{2\times2}(x))dx\notag\\
	&=\frac{1}{2}\int_{\Omega^1}\mathcal{Q}_2(x_3\,{\rm sym}\left(\nabla De_3-B(x')\right)_{2\times2}+{\rm sym}P_0(x')_{2\times2})dx\notag\\
	&\geq\frac{1}{24}\int_{\Omega}\mathcal{Q}_2({\rm sym}\left(\nabla De_3\right)_{2\times2}-\left({\rm sym}B(x')\right)_{2\times2})\,dx'.
\end{align}
Now, in view of Theorem~\ref{Th1}~(ii), (\ref{symD=0}), and (\ref{nablaV}) we have: $$(De_3)^T=\langle D_{1,3}, D_{2,3}, 0\rangle^T=\langle -D_{3,1}, -D_{3,2}, 0\rangle^T=-\langle \partial_1V_3, \partial_2V_3, 0\rangle^T,$$ in other words: $$(\nabla De_3)_{2\times2}=-\nabla^2V_3,$$ which yields the claim in Theorem~\ref{Th1}~(iii), by (\ref{lb-1}) and (\ref{lb-2}).
\end{proof}
\section{Proof of Theorem~\ref{Th-recovseq}.}
\label{sec:proof2}
\begin{proof}Recalling (\ref{Q2}) the definition of $\mathcal{Q}_2$, let $c(F)\in\mathbb{R}^3$ be the unique vector so that: $$\mathcal{Q}_2(F)=\mathcal{Q}_3(F^*+{\rm sym}(c\otimes e_3)).$$ 
The mapping $c:\mathbb{R}^{2\times2}_{\rm sym}\rightarrow\mathbb{R}^3$ is well-defined and linear, by the properties of $\mathcal{Q}_3$.   Also, for all $F\in \mathbb{R}^{3\times3}$, we denote by $l(F)$ the unique vector in $\mathbb{R}^3$, linearly depending on $F$, such that: $${\rm sym}(F-(F_{2\times2})^*)={\rm sym}(l(F)\otimes e_3).$$

	Let the out-of-plane displacement $V_3$ be as in Theorem~\ref{Th1}.  By the Rellich-Kondrachov embedding theorem, since $V_3\in W^{2,2}(\omega,\mathbb{R})$, we have $V_3\in W^{1,q}(\omega,\mathbb{R})$ for all $1\leq q<\infty$.  We first prove the result under the additional assumption of $V_3$ being smooth up to the boundary in Section~\ref{sec:smooth}.  In Section~\ref{sec:W22}, we prove the result for $V_3\in W^{2,2}(\omega,\mathbb{R})$.

\subsection{Case: $V_3\in C^{\infty}(\bar{\omega},\mathbb{R})$.}
\label{sec:smooth}  Define the recovery sequence:
\begin{align}
\label{recseq}
	&u^h(x',x_3)\notag\\
	&=\begin{bmatrix}x' \\ 0 \end{bmatrix}+\begin{bmatrix}0\\ h^{\gamma/2}V_3(x') \end{bmatrix}+x_3\begin{bmatrix}-h^{\gamma/2} (\nabla_{tan} V_3(x'))^T\\ 1 \end{bmatrix}+\frac{1}{2}h^{\gamma/2}x_3^2d^1(x'),
\end{align}
for $(x',x_3)\in\Omega^h$, where the smooth warping field $d^1:\bar{\Omega}\rightarrow\mathbb{R}^3$ is given by:
\begin{equation}
	\label{d1}
d^1=l(B)+c\left(-\nabla^2 V_3-({\rm sym} B)_{2\times2}\right).
\end{equation}
Calculating the deformation gradient, we obtain:
\begin{equation*}
	\label{nablauh-recseq}
	\nabla u^h=I_3+h^{\gamma/2}\mathcal{V}-h^{\gamma/2}x_3\left(\nabla_{tan}^2V_3\right)^*+h^{\gamma/2}\begin{bmatrix}\frac{1}{2}x_3^2\nabla_{tan} d^{1} & x_3d^1\end{bmatrix},
\end{equation*}
where the skew-symmetric matrix field $\mathcal{V}$ is given by
\begin{equation*}
\label{ssV}
	\mathcal{V}=\begin{bmatrix}0&-(\nabla_{tan} V_3)^T\\\nabla_{tan} V_3 & 0\end{bmatrix}.
\end{equation*}
The convergence statements in (i) and (ii) of Theorem~\ref{Th-recovseq} are verified by a straightforward calculation:

	For (i):
\begin{equation*}
	\norm{y^h(x',x_3)-x'}_{W^{1,2}(\Omega^1,\mathbb{R}^3)}\leq Ch.
\end{equation*}

	For (ii):
\begin{equation*}
	\norm{V^h(x')-\begin{bmatrix}0&0&V_3\end{bmatrix}^T}_{W^{1,2}(\Omega,\mathbb{R}^3)}\leq Ch^2.
\end{equation*}

 To prove (iii), we need to estimate the energy of the sequence $u^h$.   In order to do this, we shall use an auxiliary $SO(3)$-valued matrix $R^h=e^{h^{\gamma/2}\mathcal{V}}$.  Clearly, $R^h=I_3+h^{\gamma/2}\mathcal{V}+\frac{h^\gamma}{2}\mathcal{V}^2+h.o.t$ and $(R^h)^T=I_3-h^{\gamma/2}\mathcal{V}+\frac{h^\gamma}{2}\mathcal{V}^2+h.o.t$.  Also, recall that $(A^h)^{-1}=Id-h^{\gamma}S-h^{\gamma/2}x_3B+h.o.t$.  Hence, we obtain: 
\begin{align*}
&(R^h)^T(\nabla u^h)(A^h)^{-1}\\
&=I_3+h^\gamma\Big(-\frac{1}{2}\mathcal{V}^2-S\Big)+h^{\gamma/2}x_3\left(-\left(\nabla^2V_3\right)^*-B+d^1\otimes e_3\right)+h.o.t.\notag
\end{align*}

	Using the definition of the quadratic form $\mathcal{Q}_3(F)$, Taylor expanding the energy density $W$ around the identity, and taking into account the uniform boundedness of all the involved functions and their derivatives, we get in (\ref{I}):
\begin{align}
	\label{energyrs}
	I_W^h(u^h)&=\frac{1}{h}\int_{\Omega^h}W((R^h)^T(\nabla u^h)(A^h)^{-1})dx\notag\\
	&=\frac{h^{2\gamma}}{2}\int_{\Omega}\mathcal{Q}_3\Big({\rm sym}\big(-\frac{1}{2}\mathcal{V}^2-S\big)\Big)dx'\notag\\
	&\quad+\frac{h^{\gamma+2}}{24}\int_{\Omega}\mathcal{Q}_3\left({\rm sym}\left(-\left(\nabla^2V_3\right)^*-B+d^1\otimes e_3\right)\right)dx'+h.o.t.
\end{align}
In view of (\ref{energyrs}), it follows that:
\begin{align*}
	\label{energyrs-2}
	&\frac{1}{h^{\gamma+2}}I_W^h(u^h)\\
	&\quad=\frac{1}{24}\int_{\Omega}\mathcal{Q}_3\left({\rm sym}\left(-\left(\nabla^2V_3\right)^*-B+d^1\otimes e_3\right)\right)dx'+O(h^{\gamma-2}).
\end{align*}

	Observe that, from (\ref{d1}),we get that:
\begin{align*}
	&{\rm sym}\left(-\left(\nabla^2V_3\right)^*-B+d^1\otimes e_3\right)\\
	&=\left(-\nabla^2V_3-({\rm sym}B)_{2\times2}\right)^*+{\rm sym}\left(\left(d^1-l(B)\right)\otimes e_3\right)\\
&=\left(-\nabla^2V_3-({\rm sym}B)_{2\times2}\right)^*+{\rm sym}\left(\left(c\left(-\nabla^2 V_3-({\rm sym} B)_{2\times2}\right)\right)\otimes e_3\right),
\end{align*}
which implies:
\begin{equation}
	\label{energyrs-3}
	\frac{1}{h^{\gamma+2}}I_W^h(u^h)=\mathcal{I}_f(V_3)+O(h^{\gamma-2}),
\end{equation}
which, in turn, proves (iii) for smooth displacement $V_3$.

\subsection{Case: $V_3\in W^{2,2}(\omega,\mathbb{R})$.}
\label{sec:W22}
We now consider a sequence $V_3^n\in C^\infty(\overline{\omega},\mathbb{R})$ such that:
\begin{equation}
	\label{smoothapprox}
	\norm{V_3^n-V_3}_{W^{2,2}(\omega,\mathbb{R})}\rightarrow0, \hspace{3mm}\text{ as $n\rightarrow\infty$}.
\end{equation}
Define the sequence $u^{h,n}$ as in (\ref{recseq}) using $V_3^n$ instead of $V_3$:
\begin{align*}
	&u^{h,n}(x',x_3)\\
	&=\begin{bmatrix}x' \\ 0 \end{bmatrix}+\begin{bmatrix}0\\ h^{\gamma/2}V_3^n(x') \end{bmatrix}+x_3\begin{bmatrix}-h^{\gamma/2} (\nabla V_3^n(x'))^T\\ 1 \end{bmatrix}+\frac{1}{2}h^{\gamma/2}x_3^2d^{1,n}(x'),
\end{align*}
where the smooth warping field $d^{1,n}:\bar{\omega}\rightarrow\mathbb{R}^3$ is given by:
\begin{equation*}
	\label{d1n}
d^{1,n}=l(B)+c\left(-\nabla^2 V_3^n-({\rm sym} B)_{2\times2}\right).
\end{equation*}
Calculating the deformation gradient, we first obtain:
\begin{equation*}
	\label{nablauh-recseq-n}
	\nabla u^{h,n}={\rm I_3}+h^{\gamma/2}\mathcal{V}^n-h^{\gamma/2}x_3\left(\nabla^2V_3^n\right)^*+h^{\gamma/2}\begin{bmatrix}\frac{1}{2}x_3^2\nabla d^{1,n} & x_3d^{1,n}\end{bmatrix},
\end{equation*} 
where the skew-symmetric matrix field $\mathcal{V}^h$ is given by:
\begin{equation*}
\label{ssVn}
	\mathcal{V}^n=\begin{bmatrix}0&-(\nabla V_3^n)^T\\\nabla V_3^n & 0\end{bmatrix}.
\end{equation*}

	Let the sequence $\left\{n(h)\right\}$ be such that $n(h)\rightarrow\infty$ when $h\rightarrow0$, and define:
\begin{equation*}
\label{recseq-n(h)}
	u_h(x',x_3):=u^{h,n(h)}(x',x_3).
\end{equation*}
Observe that:
\begin{align}
	\label{trineq}
&\left|\frac{1}{h^{\gamma+2}}I_W^h(u_h)-\mathcal{I}_f(V_3)\right|\notag\\
&\quad\leq\left|\frac{1}{h^{\gamma+2}}I_W^h(u_h)-\mathcal{I}_f(V_3^{n(h)})\right|+\left|\mathcal{I}_f(V_3^{n(h)})-\mathcal{I}_f(V_3)\right|.
\end{align}
We will study the two terms on the right hand side of (\ref{trineq}).  As above, we use an auxiliary $SO(3)$-valued matrix $R^{h,n(h)}=e^{h^{\gamma/2}\mathcal{V}^{n(h)}}$.  Hence, we obtain: 
\begin{align*}
	&\norm{I_3-(R^{h,n(h)})^T(\nabla u_h)(A^h)^{-1}}_{L^{\infty}(\Omega^h,\mathbb{R}^{3\times3})}\\
	&\leq C\,h^{\gamma/2+1} \Big(1+\big|\big|\nabla {V_3}^{n(h)}\big|\big|_{L^{\infty}(\omega)}+\big|\big|\big(\nabla^2{V_3}^{n(h)}\big)^*\big|\big|_{L^{\infty}(\omega)}\\
	&\quad+\big|\big|\nabla^3{V_3}^{n(h)}\big|\big|_{L^{\infty}(\omega)}\Big).
\end{align*}
Let $\epsilon>0$, we can always find $h_1>0$ such that for all $h\leq h_1$: 
\begin{align}
&h^{\gamma/2+1} \Big(1+\big|\big|\nabla {V_3}^{n(h)}\big|\big|_{L^{\infty}(\omega)}+\big|\big|\big(\nabla^2{V_3}^{n(h)}\big)^*\big|\big|_{L^{\infty}(\omega)}\notag\\
	&\quad+\big|\big|\nabla^3{V_3}^{n(h)}\big|\big|_{L^{\infty}(\omega)}\Big)<\frac{\epsilon}{2C},
\end{align}
by reparameterizing the sequence $V_3^{n(h)}$, in order to slow down the rate of convergence to $V_3$.  

Then, in a similar way to (\ref{energyrs-3}), we obtain:
\begin{align*}
	&\frac{1}{h^{\gamma+2}}I_W^h(u_h)\\
	&=\mathcal{I}_f(V_3^n(n))+O\Big(\min(h^{\gamma-2},h^2)F\big(\nabla {V_3}^{n(h)},(\nabla^2{V_3}^{n(h)})^*,\nabla^3{V_3}^{n(h)}\big)\Big),\notag
\end{align*}
where the quantity $F\left(\nabla {V_3}^{n(h)},\nabla^2{V_3}^{n(h)},\nabla^3{V_3}^{n(h)}\right)$ depends only on $L^\infty(\omega)$-norms of $\nabla {V_3}^{n(h)}$, $\left(\nabla^2{V_3}^{n(h)}\right)^*$ and $\nabla^3{V_3}^{n(h)}$.  

As above, we can always find $h_2>0$ such that for all $h\leq h_2$:
$$O\left(\min(h^{\gamma-2},h^2)\big|F\big(\nabla {V_3}^{n(h)},(\nabla^2{V_3}^{n(h)})^*,\nabla^3{V_3}^{n(h)}\big)\big|\right)<\frac{\epsilon}{2},$$
which is possible by slowing down the rate of convergence of the sequence $V_3^{n(n)}$ (by reparametrizing the sequence).  Take $h_3=\min(h_1,h_2)$.  Then, for the first term on the right hand side of (\ref{trineq}), we have: 
\begin{equation*}
	\left|\frac{1}{h^{\gamma+2}}I_W^h(u_h)-\mathcal{I}_f(V_3^{n(h)})\right|<\frac{\epsilon}{2},
\end{equation*}
for all $h\leq h_3$.

Also, by (\ref{If}) and (\ref{Q2}), we obtain for the second term on the right hand side of (\ref{trineq}):
\begin{align*}
	\left|\mathcal{I}_f(V_3^{n(h)})-\mathcal{I}_f(V_3)\right|&\leq\frac{1}{24}\int_{\omega}\Big|\mathcal{Q}_2(\nabla^2 V_3^{n(h)}+\left({\rm sym}B(x')\right)_{2\times2})\\
	&\quad-\mathcal{Q}_2(\nabla^2 V_3+\left({\rm sym}B(x')\right)_{2\times2})\Big|\,dx'\\
&=\frac{1}{24}\int_{\omega}\Big|\mathcal{Q}_3\Big(\big(-\nabla^2V_3^{n(h)}-({\rm sym}B)_{2\times2}\big)^*\\
&\quad+{\rm sym}\Big(\big(c\big(-\nabla^2 V_3^{n(h)}-({\rm sym} B)_{2\times2}\big)\big)\otimes e_3\Big)\Big)\\
&\quad-\mathcal{Q}_3\Big(\big(-\nabla^2V_3-({\rm sym}B)_{2\times2}\big)^*\\
&\quad+{\rm sym}\left(\left(c\left(-\nabla^2 V_3-({\rm sym} B)_{2\times2}\right)\right)\otimes e_3\right)\Big)\Big|\,dx'.
\end{align*}
	Define: 
\begin{align*}
T^{n(h)}&=\big(-\nabla^2V_3^{n(h)}-({\rm sym}B)_{2\times2}\big)^*\\
	&\quad+{\rm sym}\Big(\big(c\big(-\nabla^2 V_3^{n(h)}-({\rm sym} B)_{2\times2}\big)\big)\otimes e_3\Big),
\end{align*} and: 
\begin{align*}
T&=\big(-\nabla^2V_3-({\rm sym}B)_{2\times2}\big)^*\\
&\quad+{\rm sym}\left(\left(c\left(-\nabla^2 V_3-({\rm sym} B)_{2\times2}\right)\right)\otimes e_3\right),
\end{align*} 
and notice that: $$\norm{T^{n(h)}}_{L^2(\omega)}+\norm{T}_{L^2(\omega)}\leq C.$$
Then, using H\"older's inequality and properties of $\mathcal{Q}_3$, we obtain:
\begin{align*}
	&\left|\mathcal{I}_f(V_3^{n(h)})-\mathcal{I}_f(V_3)\right|\\
	&\leq\frac{1}{24}\int_{\omega}\Big|\mathcal{Q}_3(T^{n(h)})-\mathcal{Q}_3(T)\Big|\,dx'\\
	&\leq C\big|\big|T^{n(h)}-T\big|\big|_{L^2(\omega)}\big(\big|\big|T^{n(h)}\big|\big|_{L^2(\omega)}+\big|\big|T\big|\big|_{L^2(\omega)}\big)\\
	&\leq C\big|\big|T^{n(h)}-T\big|\big|_{L^2(\omega)},
\end{align*}
Now, by (\ref{smoothapprox}), there exists $h_4>0$ such that
\begin{align*}
	\norm{T^{n(h)}-T}_{L^2(\omega)}&=\big|\big|(\nabla^2V_3^{n(h)}-\nabla^2V_3)^*\\
	&\quad+{\rm sym}\big(\big(c\big(\nabla^2 V_3^{n(h)}-\nabla^2 V_3\big)\big)\otimes e_3\big)\big|\big|_{L^2(\omega)}\\
	&\leq\big|\big|\big(\nabla^2V_3^{n(h)}-\nabla^2V_3\big)^*\big|\big|_{L^2(\omega)}\\
	&\quad+\big|\big|{\rm sym}\big(\big(c\big(\nabla^2 V_3^{n(h)}-\nabla^2 V_3\big)\big)\otimes e_3\big)\big|\big|_{L^2(\omega)}\\
	&<\frac{\epsilon}{2C},
\end{align*}
for all $h\leq h_4$.  Finally, taking $h^*=\min(h_3,h_4)$, we obtain that for all $h\leq h^*$ we have: $$\Big|\frac{1}{h^{\gamma+2}}I_W^h(u_h)-\mathcal{I}_f(V_3)\Big|<\epsilon. $$  This concludes the proof of (iii) in Theorem~\ref{Th-recovseq}.
\end{proof}
\section*{Acknowledgment.}
The authors would like to thank Marta Lewicka for bringing this problem to their attention and for useful discussions and feedback.

\bibliographystyle{plain}
\bibliography{PrestrainedElasticity}

\end{document}